\def\rr{{\mathbb R}}
\def\rn{{{\rr}^n}}
\def\cn{{\mathbb N}}
\def\D{{\mathscr D}}
\def\supp{{\mathop\mathrm{\,supp\,}}}
\def\loc{{\mathop\mathrm{\,loc\,}}}
\def\gz{{\gamma}}
\def\E{\mathscr{E}}
\def\L{\mathcal{L}}
\def\r{\right}
\def\lf{\left}
\newtheorem{thm}{Theorem}[section]
\newtheorem{lem}[thm]{Lemma}
\newtheorem{prop}[thm]{Proposition}
\newtheorem{cor}[thm]{Corollary}
\numberwithin{equation}{section}
\begin{document}
\arraycolsep=1pt
\author{Renjin Jiang \& Fanghua Lin}
\title{{\bf Riesz transform under perturbations via heat kernel regularity}
 \footnotetext{\hspace{-0.35cm} 2010 {\it Mathematics
Subject Classification}. Primary  58J35; Secondary  58J05; 35B65; 35K05; 42B20.
\endgraf{
{\it Key words and phrases: Riesz transform, harmonic functions, heat kernels, perturbation}
\endgraf}}}
\maketitle

\begin{center}
\begin{minipage}{11.5cm}\small
{\noindent{\bf Abstract}. Let $M$ be a complete non-compact Riemannian manifold. In this paper,
we derive sufficient conditions on metric perturbation for stability of $L^p$-boundedness of the Riesz transform, $p\in (2,\infty)$.
 We also provide counter-examples regarding in-stability for
$L^p$-boundedness of Riesz transform.
}\end{minipage}
\end{center}
\vspace{0.2cm}
%\tableofcontents

\section{Introduction}
\hskip\parindent
Let $M$ be a complete, connected and non-compact $n$-dimensional  Riemannian manifold, $n\ge 2$.
In this paper, we study the behavior of the Riesz transform under metric perturbations.
As a main tool and also a byproduct, we also obtain stability and instability
of gradient estimates of harmonic functions and heat kernels under metric perturbation.

Let $g_0$ and $g$ be two Riemannian metrics on $M$. Let $\mu_0$, $\mu$, $\mathcal{L}_{0}$, $\mathcal{L}$, $\nabla_0$, $\nabla$,
$\mathrm{div}_0$, $\mathrm{div}$,
be the corresponding Riemannian volumes, non-negative Laplace-Beltrami operators, Riemannian gradient operators and divergence operators,
generated by $g_0$ and $g$, respectively.

Suppose that $g_0$ and $g$ are comparable on $M$, i.e., there exist $C\ge 1$
 such that for any $x\in M$ and ${\bf {v}}=(v_1,\cdots, v_n)\in T_x(M)$ it holds
 \begin{equation}\label{quasi-isometry}
 C^{-1}g^{ij}v_iv_j\le g_0^{ij}v_iv_j\le Cg^{ij}v_iv_j.
\end{equation}
Then a natural question is: if the Riesz operator $\nabla_0\L^{-1/2}_0$ is bounded on $L^p(M,\mu_0)$,
$p\in (1,\infty)$, is $\nabla\L^{-1/2}$ also bounded on $L^p(M,\mu)$?

Note that the case $p=2$ is trivially true as the Riesz operators $\nabla_0\L^{-1/2}_0$ and $\nabla\L^{-1/2}$ are isometries on
$L^2(M,\mu_0)$ and $L^2(M,\mu)$, respectively. For the case $p\in (1,2)$, it was shown by
Coulhon and Duong \cite{cd99} that the Riesz operator is weakly $L^1$-bounded under a doubling
condition and a Gaussian upper bound for the heat kernel. The $L^p$-boundedness then follows from an
interpolating, for all $p\in (1,2)$.

Let $B_0(x,r)$, $B(x,r)$ be open balls induced by the metrics $g_0,\,g$ respectively,  and
$V_0(x,r)$ and $V(x,r)$ the volumes $\mu_0(B_0(x,r))$ and $\mu(B(x,r))$ respectively.
We say that $(M,g_0)$ satisfies a doubling condition, if the exists $C_D>0$ such that for any $x\in M$
and for all $r>0$ that
$$V_0(x,2r)\le C_D V_0(x,r),\leqno(D)$$
and that the heat kernel $p^0_t(x,y)$ of $e^{-t\L_0}$ satisfies a Gaussian upper bound, if there exists $c,C>0$ such that
$$p^0_t(x,y)\le
  \frac{C}{V(x,{\sqrt t})}\exp\lf\{-c\frac{d^2(x,y)}{t}\r\}, \forall\,t>0 \ \& \ \forall\ x,y\in M. \leqno(GUB)
$$
According to \cite{bcs15}, under $(D)$, $(GUB)$ is equivalent to a version of the Sobolev-Poincare
inequality that there exists $q>2$ such that for every ball $B_0(x,r)$ and
each $f\in C^\infty_0(B_0(x,r))$,
$$\left(\fint_{B_0(x,r)}|f|^q\,d\mu_0\right)^{2/q}\le
C_{LS}\left(\fint_{B_0(x,r)}|f|^2\,d\mu_0+r^2\fint_{B_0(x,r)}|\nabla_0 f|^2\,d\mu_0\right).\leqno(SI)$$
Above and in what follows, for a measurable set $\Omega$, $\fint_{\Omega}g\,d\mu_0$ denotes the average of the integrand over it.

It is easy to see that $(D)$ and $(SI)$ are invariant under quasi-isometries. Therefore,
if $(D)$ and $(GUB)$ are satisfied on $(M,g_0)$, then they are also satisfied on $(M,g)$, and
the Riesz operator $\nabla \L^{-1/2}$ is bounded on $L^p(M,\mu)$ for all $p\in (1,2)$.

The case $p>2$ is more involved.
It was shown in \cite{cjks16}  that on a Riemannian manifold $(M,g)$, under $(D)$ together with a scale invariant
 $L^2$-Poincar\'e inequality,
$\nabla \mathcal{L}^{-1/2}$ is bounded on $L^p(M,\mu)$, $p\in (2,\infty)$, if and only if, it holds for every ball $B(x,r)$ and
every solution to $\L u=0$ on $B(x,r)$ that
$$\left(\fint_{B(x,r)}|\nabla u|^p\,d\mu\right)^{1/p}\le \frac{C}{r}\fint_{B(x,r)}|u|\,d\mu.\leqno(RH_{p})$$
See \cite{shz05} for the case $\rn$, and \cite{ac05,acdh,ji17} for earlier results and also further generalizations.
Further, it was shown in \cite{cjks18} that, the local Riesz transform $\nabla (1+\mathcal{L})^{-1/2}$ is bounded on $L^p(M,\mu)$,
$p\in (2,\infty)$, if and only if, the above inequality $(RH_p)$ holds for all balls $B(x,r)$ with $r<1$.

By the perturbation result of Caffarelli and Peral \cite{cp98}, one has a good understanding of the
local gradient estimates for elliptic equations on $\rn$. In particular, for a uniformly elliptic operator $L=-\mathrm{div_\rn}A\nabla_\rn$, if $A$ is uniformly continuous, then \cite{cp98} implies that
any $L$-harmonic functions satisfies $(RH_p)$  on small balls $B(x,r)$ with $r<1$ for all $p<\infty$. This gives the
$L^p$-boundedness of the local Riesz transform $\nabla_{\rn} (1+L)^{-1/2}$ for all $p\in (2,\infty)$.

Then how about the Riesz transform $\nabla_{\rn} L^{-1/2}$?
It was well-known that, for any $p>2$, there exists a uniformly elliptic operator (Meyer's  conic Laplace operator) on $\rr^2$ such that the Riesz transform is not bounded on $L^p(\rr^2)$; see \cite[p.120]{at98} and also Section \ref{example}.
Noting that the conic Laplace operators do not enjoy smoothness at the origin, one may wonder
what happens if the coefficients are smooth?
We however have the following example.
\begin{prop}\label{counter-example}
For any given $p>2$ and $n\ge 2$, there exists a $C^\infty(\rn)$ matrix $A(x)$ satisfying uniformly elliptic condition,
$$c|\xi|^2\le \langle A\xi,\xi\rangle\le C|\xi|^2, \ \forall\ \xi\in\rn,$$
and each order of gradients of $A(x)$ is bounded, such that the Riesz operator
$\nabla_{\rn} L^{-1/2}$, $L=-\mathrm{div_\rn}A\nabla_\rn$, is not bounded on $L^p(\rr^n)$.
\end{prop}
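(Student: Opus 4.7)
The plan is to invoke the characterization of \cite{cjks16} (see also \cite{shz05} in the $\rn$ setting): since any uniformly elliptic divergence-form operator on $\rn$ automatically satisfies doubling, a scale-invariant $L^2$-Poincar\'e inequality, and (via Aronson's estimate) the Gaussian upper bound, $L^p$-boundedness of $\nabla L^{-1/2}$ is equivalent to the reverse H\"older property $(RH_p)$ holding uniformly on all balls. The task thus reduces to exhibiting a smooth uniformly elliptic $A$ with bounded derivatives of every order for which $(RH_p)$ fails on a sequence of balls of increasing radii.

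The construction starts from Meyer's conic example, in a form tailored to make the analysis explicit. Given $p>2$, fix $\alpha\in(0,1-2/p)$, and take the degree-zero homogeneous matrix $A_0(x)=I+(\alpha^{-2}-1)\hat r\hat r^T$ on $\rr^2\setminus\{0\}$, i.e. $\mathrm{diag}(\alpha^{-2},1)$ in the polar orthonormal frame. The crucial feature of this choice is that $L_0=-\mathrm{div}(A_0\nabla)$ exactly diagonalizes the Fourier angular decomposition $\sum_k c_k(r)\cos(k\theta)$: separation of variables reduces $L_0 u=0$ to the Euler equation $r^2 c_k''+rc_k'-\alpha^2 k^2 c_k=0$, whose Frobenius solutions are $c_k(r)=r^{\pm\alpha k}$. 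In particular $u_0(r,\theta)=r^\alpha\cos\theta$ is $L_0$-harmonic. Degree-zero homogeneity gives $|\nabla^k A_0(x)|\lesssim|x|^{-k}$, so every derivative of $A_0$ is uniformly bounded on $\{|x|\ge 1\}$. Smooth $A_0$ near the origin by setting $\tilde A=\chi(|x|)A_0+(1-\chi(|x|))I$ with a radial cutoff $\chi\in C^\infty(\rr)$ vanishing on $[0,1/2]$ and equal to $1$ on $[1,\infty)$, and for $n\ge 3$ put $A=\tilde A\oplus I_{n-2}$. Since $\tilde A$ is still diagonal in the polar frame with $r$-dependent entries only, $L^{(2)}=-\mathrm{div}(\tilde A\nabla)$ retains the exact diagonalization of the angular modes.

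For $R\gg 1$ let $\tilde u_R$ solve the Dirichlet problem for $L^{(2)}$ on the $2$-disk $B^{(2)}(0,R)$ with data $R^\alpha\cos\theta$, and define the test function $u_R(x_1,x_2,x'):=\tilde u_R(x_1,x_2)$ on $\rn$; by the product structure of $A$ and $x'$-independence, $u_R$ is $L$-harmonic on $B(0,R)\subset\rn$. The angular-mode diagonalization confines $\tilde u_R$ to the $\cos\theta$-mode, so on the annulus $\{1\le r\le R\}$ (where $L^{(2)}=L_0$) one has $\tilde u_R=(a_R r^\alpha+b_R r^{-\alpha})\cos\theta$, and $C^1$-matching at $r=1$ with the unique bounded $\cos\theta$-mode solution of the $R$-independent inner Dirichlet problem on $B^{(2)}(0,1)$ gives $a_R=1+O(R^{-2\alpha})$ and $b_R=O(1)$. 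A routine cylindrical integration then yields $\fint_{B(0,R)}|u_R|\,dx\asymp R^\alpha$, and, using $|\nabla\tilde u_R|\lesssim r^{\alpha-1}+r^{-\alpha-1}$ on the annulus plus smoothness on $B^{(2)}(0,1)$, $\int_{B(0,R)}|\nabla u_R|^p\,dx\asymp R^{n-2}$: the essential point is that $\alpha<1-2/p$ makes $\int_1^\infty r^{p(\alpha-1)+1}\,dr<\infty$, so the $L^p$-gradient integral is dominated by the $\{r\sim 1\}$ region, whose $x'$-slice has volume $\asymp R^{n-2}$. Therefore
\[
\frac{\bigl(\fint_{B(0,R)}|\nabla u_R|^p\,dx\bigr)^{1/p}}{R^{-1}\fint_{B(0,R)}|u_R|\,dx}\asymp\frac{R^{-2/p}}{R^{\alpha-1}}=R^{1-2/p-\alpha}\longrightarrow\infty,
\]
which contradicts $(RH_p)$ and, via \cite{cjks16}, rules out $L^p$-boundedness of $\nabla L^{-1/2}$.

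The hard part will be the rigorous mode decomposition of $\tilde u_R$ on the annulus together with the uniform-in-$R$ control of the coefficients $a_R,b_R$. This step is elementary here precisely because of the specific choice $A_0=\mathrm{diag}(\alpha^{-2},1)$ in the polar frame, which has no $r$-$\theta$ cross term and is $\theta$-independent, so $L^{(2)}$ is exactly diagonal in the $\cos(k\theta)$ basis and the inner problem reduces to an explicit ODE on $[0,1]$. For a general conic $A_0$ one would instead need a compactness or angular-eigenvalue argument to control mode coupling across the smoothing region; with the present choice this is bypassed and the failure of $(RH_p)$ is read off directly from the Frobenius pair $r^{\pm\alpha}$.
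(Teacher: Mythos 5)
Your argument is correct in its essential structure, and it takes a genuinely different route from the paper. Both proofs rest on the same bridge from \cite{cjks16,shz05} --- on $\rn$ with a uniformly elliptic $L$, $(D)$, $(PI)$ and $(GUB)$ are automatic, so $L^p$-boundedness of $\nabla L^{-1/2}$ is equivalent to the scale-invariant interior reverse H\"older estimate $(RH_p)$ --- and both take Meyer's conic operator as the source of the singularity. The divergence is in how $(RH_p)$ is shown to fail for a smooth coefficient matrix. The paper proves a general approximation statement (Theorem \ref{matrix-smooth}): for \emph{any} measurable uniformly elliptic $A$ there is a smooth $B$ with all derivatives bounded and a sequence $r_k\to\infty$ with $B(r_k\,\cdot)\to A$ a.e.; the failure of $(RH_p)$ for $B$ is then extracted by contradiction, via a $G$-convergence / weak-$W^{1,p}$ compactness argument on rescaled Dirichlet problems, from the local non-$W^{1,p}$ behaviour of the $A$-harmonic function. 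You instead observe that the conic matrix itself already has bounded derivatives of every order on $\{|x|\ge 1/2\}$, so a radial cutoff to $I$ near the origin gives a smooth coefficient matrix directly, and the radial and $\theta$-independent structure keeps the angular Fourier modes decoupled, reducing the Dirichlet problem on a large disk to a scalar ODE that can be solved and matched explicitly. The payoff of your route is elementariness and a completely explicit counterexample with explicit growth rates; the cost is that it is tied to the angular-isotropic form of the cone. The paper's route pays the price of the $G$-convergence machinery but in exchange obtains Theorem \ref{matrix-smooth} as a reusable tool that works for arbitrary measurable elliptic $A$, including ones that cannot be handled by separation of variables.

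Two small points of care for your write-up: (i) the $(RH_p)$ statement to be contradicted should be used in its interior form, i.e.\ $\bigl(\fint_{B(0,R)}|\nabla u_R|^p\bigr)^{1/p}\le CR^{-1}\fint_{B(0,2R)}|u_R|$ for $u_R$ solving on $B(0,2R)$; solving the Dirichlet problem on the doubled disk and estimating as you do leaves the asymptotics unchanged, but the statement over a single ball is not the one cjks16 gives. (ii) You need, and can easily get, a \emph{lower} bound on $\int_{B(0,R)}|\nabla u_R|^p\,dx\gtrsim R^{n-2}$; this follows since, in the matching at $r=1$, the inner solution $\phi$ satisfies $\phi'(1)>0$ (the flux $ra(r)\phi'$ is increasing from $0$), so $a_R-b_R$ stays bounded away from zero and $|\nabla\tilde u_R|$ is of size one on $\{r\sim 1\}$ uniformly in $R$.
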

The above proposition implies that apart from the local smoothness (local regularity), some
global controls of the perturbation are needed for the stability issue. { For some
related results we refer to a study of (asymptotically) conic elliptic operators in \cite{lin96} and
other examples motivated by the theory of elliptic homogenization (cf. \cite{AL87b} and \cite[Further
Remarks]{AL87}). In fact one can construct (with some extra work) examples of uniformly regular,
uniformly elliptic operators with isotropic coefficient matrices $a(x)I_n$ so that the conclusion of
the above proposition remains valid.}

In what follows, we shall use the Einstein summation convention for repeated indexes, and $\delta_{ik}$ the
 Kronecker delta function. Our main result reads as follows.
\begin{thm}\label{main-manifold}
Let $g_0,\,g$ be two Riemannian metrics on $M$. Assume  that $g_0$ and $g$ are comparable and
there exists $\epsilon>0$ such that
$$\fint_{B_0(x,r)}|\delta_{ik}-g_0^{ij}g_{jk}|\,d\mu_0\le Cr^{-\epsilon},\ \forall \,1\le i,k\le n,\,\forall x\in M\,\&\,\forall \,r>1.\leqno(GD)$$
Suppose that $(M,g)$ satisfies $(D)$ and $(GUB)$.
Then if for some $p_0\in (2,\infty)$, $\nabla_0 \mathcal{L}_{0}^{-1/2}$ is bounded on
$L^{p}(M,\mu_0)$ and $\nabla (\mathcal{L}+1)^{-1/2}$ is bounded on $L^{p}(M,\mu)$ for all $p\in (2,p_0)$,
$\nabla \mathcal{L}^{-1/2}$ is  bounded on $L^p(M,\mu)$ for all $p\in (2,p_0)$.
\end{thm}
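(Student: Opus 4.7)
The plan is to reduce the conclusion to verifying the reverse-Hölder gradient estimate $(RH_p)$ for $\mathcal{L}$-harmonic functions on \emph{all} balls of $(M,g)$, and then to establish this estimate via a perturbation from the $\mathcal{L}_0$-side. By the quasi-isometry \eqref{quasi-isometry}, $(D)$, $(SI)$ (equivalently $(GUB)$), and the scale-invariant $L^2$-Poincaré inequality transfer between $(M,g_0)$ and $(M,g)$, so the characterization of \cite{cjks16} reduces ``$\nabla\mathcal{L}^{-1/2}$ is bounded on $L^p(M,\mu)$'' to the validity of $(RH_p)$ on every ball. The assumed boundedness of $\nabla(\mathcal{L}+1)^{-1/2}$ on $L^p(M,\mu)$ combined with the local characterization of \cite{cjks18} already yields $(RH_p)$ on small balls $B(x,r)$ with $r<1$, so the remaining task is to produce $(RH_p)$ on balls of radius $r\ge 1$.

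For such a large ball $B=B(x,2r)$ and an $\mathcal{L}$-harmonic $u$ on $B$, I would fix a concentric sub-ball $B_0=B_0(x,r')$ with $r'$ comparable to $r$ and let $v$ be the $\mathcal{L}_0$-harmonic replacement of $u$ on $B_0$, i.e., $\mathcal{L}_0 v=0$ in $B_0$ with $v-u\in W_0^{1,2}(B_0)$. Since $\nabla_0\mathcal{L}_0^{-1/2}$ is bounded on $L^p(M,\mu_0)$, the characterization of \cite{cjks16} gives
$$\left(\fint_{\frac12 B_0}|\nabla_0 v|^p\,d\mu_0\right)^{1/p}\le \frac{C}{r}\fint_{B_0}|v|\,d\mu_0.$$
To compare $u$ and $v$, I would rewrite $\mathcal{L} u=0$ as $\int \widetilde g^{ij}\,\partial_i u\,\partial_j\varphi\,d\mu_0=0$, with $\widetilde g^{ij}=g^{ij}\sqrt{\det g/\det g_0}$, and subtract from $\int g_0^{ij}\partial_i v\,\partial_j\varphi\,d\mu_0=0$ with $\varphi=u-v$ to obtain the discrepancy identity
$$\int_{B_0}g_0^{ij}\partial_i(u-v)\partial_j(u-v)\,d\mu_0=\int_{B_0}\bigl(g_0^{ij}-\widetilde g^{ij}\bigr)\partial_i u\,\partial_j(u-v)\,d\mu_0.$$
By the quasi-isometry, $|g_0-\widetilde g|$ is bounded pointwise, while by $(GD)$ its $L^1$-average on $B_0$ is at most $Cr^{-\epsilon}$. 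Interpolating these two bounds with Meyers' higher-integrability of $|\nabla u|$ should yield
$$\left(\fint_{B_0}|\nabla(u-v)|^2\,d\mu_0\right)^{1/2}\le Cr^{-\eta}\left(\fint_{2B_0}|\nabla u|^2\,d\mu_0\right)^{1/2}$$
for some $\eta>0$, i.e.\ $\nabla u$ is approximated by $\nabla v$ in $L^2$-average on the large ball.

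The final step is a good-$\lambda$ / Calderón–Zygmund iteration in the spirit of Shen: the $(RH_p)$ estimate for $v$ combined with the above $r^{-\eta}$ approximation converts the $L^2$-control of $\nabla u$ into $L^p$-control, with an error term that can be absorbed by choosing $r$ large or by iterating on a suitable dyadic family of sub-balls. This yields $(RH_p)$ for $\mathcal{L}$-harmonic functions on all balls of radius $r\ge 1$ and, together with the small-ball case, on every ball, completing the proof via \cite{cjks16}. The main technical obstacle is the perturbation estimate itself: extracting an honest $L^p$ improvement from the merely averaged $L^1$-decay $r^{-\epsilon}$ of the coefficient discrepancy requires carefully balancing the pointwise ellipticity bound, Meyers' self-improvement for $\nabla u$, and the $(RH_p)$ bound for $v$, which is the most delicate point of the argument.
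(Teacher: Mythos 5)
Your plan is to bypass the semigroup/functional-calculus machinery entirely and run a direct elliptic perturbation in the style of Caffarelli--Peral and Shen: characterize $L^p$-boundedness of the Riesz transform by $(RH_p)$ on all balls, get small balls from the local Riesz hypothesis, and get large balls by $\mathcal{L}_0$-harmonic replacement plus a good-$\lambda$ iteration. This is a genuinely different route from the paper, which instead follows Coulhon--Dungey: decompose $\nabla\mathcal{L}_0^{-1/2}-\nabla\mathcal{L}^{-1/2}$ as $\frac{1}{\pi}\int_0^\infty\nabla\bigl[(1+t\mathcal{L}_0)^{-1}-(1+t\mathcal{L})^{-1}\bigr]\frac{dt}{\sqrt t}$, and bootstrap a power decay $t^{-\alpha}$ of the operator norm $\|\nabla[(1+t\mathcal{L}_0)^{-1}-(1+t\mathcal{L})^{-1}]\|_{p\to p}$ obtained from Propositions \ref{metric-lem-1}--\ref{metric-lem-2} (which exploit $(GD)$ together with the heat-kernel Gaussian bounds and $(GLY_p)$).

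There is, however, a genuine gap in the hypotheses. The characterization from \cite{cjks16} that you rely on at the very start -- ``$\nabla\mathcal{L}^{-1/2}$ bounded on $L^p$ iff $(RH_p)$ on all balls'' -- is established under $(D)$ plus a scale-invariant $L^2$-Poincar\'e inequality $(PI)$. Theorem \ref{main-manifold}, on the other hand, assumes only $(D)$ and $(GUB)$. As the paper itself notes, $(GUB)$ is equivalent (under $(D)$) to the Sobolev inequality $(SI)$, which is strictly weaker than $(PI)$: $(GUB)$ does not imply the matching heat-kernel lower bound of $(LY)$, hence does not imply $(PI)$. So the $(RH_p)$ reduction is simply not available under the stated hypotheses, and the same objection applies to the use of the \cite{cjks16} characterization to extract $(RH_p)$ for $\mathcal{L}_0$-harmonic functions from $\|\nabla_0\mathcal{L}_0^{-1/2}\|_{p\to p}<\infty$. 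What your outline would prove, once the technical details of the Meyers/good-$\lambda$ step were filled in, is essentially the paper's Theorem \ref{metric-perturbation-ly}, which does assume $(PI)$ (and in exchange obtains an endpoint conclusion at $p=p_0$). The semigroup argument in the paper is precisely what is needed to handle the weaker $(GUB)$-only case, at the price of the loss of integrability $p<p_0$.

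Two further points worth flagging even in the $(PI)$ setting. First, the claimed discrepancy bound $\bigl(\fint_{B_0}|\nabla(u-v)|^2\bigr)^{1/2}\le Cr^{-\eta}\bigl(\fint_{2B_0}|\nabla u|^2\bigr)^{1/2}$ is achievable (take $q=(1+\delta)/\delta$ dual to Meyers' $1+\delta$ and interpolate the $L^\infty$ bound with the $(GD)$ $L^1$-average decay), but the exponent $\eta$ is then tied to the Meyers gain $\delta$, which is only qualitatively positive; this should be acknowledged rather than asserted. Second, the good-$\lambda$ absorption requires the approximation error to fall below a threshold $\delta_0(p)$, which your $r^{-\eta}$ only achieves for $r\ge R_0(p,\eta)$; you then need to handle the intermediate scales $1\le r<R_0$ (by covering with sub-unit balls and the local $(RH_p)$), and to be careful that the scale-dependent decomposition used at each sub-cube is the right one. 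These are fixable but nontrivial, and they are precisely the points where the paper's semigroup proof gives a cleaner and more self-contained argument.
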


Some remarks are in order. First, the above result can not be true if $\epsilon=0$ as indicated by
the Proposition \ref{counter-example} though one may replace the algebraic decay condition by
a Dini-type condition. Next, if we strengthen the assumption $(GUB)$ to two sides bounds
of the heat kernel then we can include the endpoint that $p = p_0$; see Theorem \ref{metric-perturbation-ly} below.
Moreover, as the  $L^p$-boundedness of the Riesz operator $\nabla \L^{-1/2}$ implies
$$\|\nabla e^{-t\L}\|_{L^p(M,\mu)\to L^p(M,\mu)}\le \frac{C}{\sqrt t}, \ \forall\,t>0,$$
by \cite{acdh,cjks16},
 one further sees that the gradient estimates for heat kernels and harmonic functions are also stable under
such metric perturbations.

Coulhon-Dungey \cite{cdn07} has addressed the stability issue of Riesz transform under perturbations.
In \cite{cdn07}, no assumptions on the volume growth or the upper bound of the heat kernel were required.
However, they assumed the ultra-contractivity, i.e.,
$$\|e^{-t\L}\|_{L^1(M,\mu)\to L^\infty(M,\mu)}\le Ct^{-D/2}, \ t\ge 1;$$
and that $\delta_{ik}-g_0^{ij}g_{jk}\in L^q(M,\mu_0)$ for some $q\in [1,\infty)$ instead of $(GD)$;
see \cite[Theorem 4.1]{cdn07}.
In the case of Riemannian manifolds with lower Ricci curvature bound, the ultra-contractivity requires the volume of unit balls
is non-collapsing, i.e., $\inf_{x\in M} V(x,1)>0$; see \cite[Proposition 3.1]{ji15}.
By \cite{ck88}, there are Riemannian manifolds with non-negative Ricci curvature, on which the volume
of unit balls does collapse.  Moreover, if $(D)$ and $\inf_{x\in M} V(x,1)>0$  hold, then $\delta_{ik}-g_0^{ij}g_{jk}\in L^q(M,\mu_0)$, $q\in [1,\infty)$, implies $(GD)$.

{ Recently, Blank, Le Bris and Lions \cite{bbl18a,bbl18b} addressed the issue of perturbations
related to the elliptic homogenization, their results are rather interesting in comparison with that
of \cite{cdn07} for the case that $(M_0,h)$ being Euclidean spaces with a nice periodic metric $h$.
Instead of the ultra-contractivity property as described above, \cite{bbl18a,bbl18b} used a
continuity argument starting from the estimates established in \cite{AL91}. We also
note that most of conclusions of \cite{bbl18a,bbl18b} are also true for systems, while our proofs
here and \cite{cdn07} work only for the scalar case.}

Our main achievement here is that we find the condition $(GD)$, which works also for the collapsing
case. In particular results here cover the case of complete manifolds with non-negative Ricci
curvature.
{  In general  $(GD)$ allows a much larger class (than $L^p$ ($p<\infty$)) of
perturbations. For example, in $\rn$, a perturbation along a strip $\rr^{n-1}\times [0,1]$ satisfies
$(GD)$ but is not in $L^p$ for $p<\infty$. }
For the proof, we shall follow the basic strategy of \cite{cdn07}, where the key step is to estimate
the difference of the operator norm
$$\|\nabla[(1+t\mathcal{L}_0)^{-1}-(1+t\mathcal{L})^{-1}]\|_{L^p(M,\mu)\to L^p(M,\mu)}\le Ct^{-\alpha-1/2}, \ t\ge 1,$$
for some $\alpha>0$. If the ultra-contractivity holds, then such an estimate is relatively easy to
establish; see \cite[Proposition 2.2]{cdn07}.
However, without ultra-contractivity, the proof is more involved. The estimates for the heat kernel
and its gradient (cf. \cite{acdh,cjks16}), together with $(GD)$ are essential in our proofs.

Let us list several consequences of Theorem \ref{main-manifold}. It is well known that if
$(M,g)$ has lower Ricci curvature bound, then the local Riesz transform is $L^p$-bounded for all $p\in (1,\infty)$; see \cite{acdh}.
\begin{cor}\label{cor-lowerRicci}
Assume that $g,\,g_0$ are two metrics on $M$, that satisify  \eqref{quasi-isometry} and there exists $\epsilon>0$ such that
$$\fint_{B_0(x,r)}|\delta_{ik}-g_0^{ij}g_{jk}|\,d\mu_0\le Cr^{-\epsilon},\  \forall \,1\le i,k\le n,\,\forall x\in M\,\&\,\forall \,r>1.$$
Suppose that $(M,g)$ has Ricci curvature bounded from below and satisfies $(D)$ and $(GUB)$.
Then if  $\nabla_0 \mathcal{L}_{0}^{-1/2}$ is bounded on $L^{p}(M,\mu_0)$ for all $p\in (2,p_0)$, where  $p_0\in (2,\infty]$,
$\nabla \mathcal{L}^{-1/2}$ is also bounded on $L^p(M,\mu)$ for all $p\in (2,p_0)$.
\end{cor}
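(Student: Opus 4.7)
The plan is to reduce Corollary~\ref{cor-lowerRicci} to Theorem~\ref{main-manifold} by verifying that the Ricci lower bound hypothesis on $(M,g)$ automatically supplies the one ingredient of Theorem~\ref{main-manifold} that is not already listed among the hypotheses of the corollary. Comparing the two statements, the corollary explicitly inherits the comparability \eqref{quasi-isometry} of $g$ and $g_0$, the decay condition $(GD)$, the doubling $(D)$ and Gaussian upper bound $(GUB)$ on $(M,g)$, and the $L^p$-boundedness of the Riesz operator $\nabla_0 \mathcal{L}_0^{-1/2}$ on $L^p(M,\mu_0)$ for $p\in(2,p_0)$. The only ingredient still required by Theorem~\ref{main-manifold} that the corollary does not directly assume is the $L^p$-boundedness of the local Riesz transform $\nabla(\mathcal{L}+1)^{-1/2}$ on $L^p(M,\mu)$ for $p\in(2,p_0)$.

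To furnish this, I would invoke the theorem of Auscher--Coulhon--Duong--Hofmann \cite{acdh}, which is quoted in the paragraph preceding the corollary: on any complete Riemannian manifold with Ricci curvature bounded from below, the local Riesz transform $\nabla(\mathcal{L}+1)^{-1/2}$ is bounded on $L^p$ for \emph{every} $p\in(1,\infty)$. Since by hypothesis $(M,g)$ has Ricci curvature bounded from below, this supplies the missing boundedness in its strongest form, covering all $p\in(1,\infty)$ and in particular every $p\in(2,p_0)$.

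With every hypothesis of Theorem~\ref{main-manifold} now verified, I would conclude by direct invocation: for each $p\in(2,p_0)$, the Riesz transform $\nabla \mathcal{L}^{-1/2}$ is bounded on $L^p(M,\mu)$. The only minor bookkeeping point is that the corollary allows $p_0=\infty$, whereas Theorem~\ref{main-manifold} is stated for $p_0\in(2,\infty)$; this is handled by applying Theorem~\ref{main-manifold} separately with each finite $p_0'\in(2,\infty)$, and taking the union of the resulting ranges. There is no substantive obstacle in the argument, since the entire content of the corollary is to observe that a Ricci lower bound is precisely the most natural sufficient condition for the local Riesz transform hypothesis, so the corollary is really a clean repackaging of Theorem~\ref{main-manifold} in the curvature-bounded setting.
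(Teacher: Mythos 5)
Your proposal matches the paper's proof exactly: it supplies the missing local Riesz transform hypothesis via the Auscher--Coulhon--Duong--Hofmann result for manifolds with Ricci curvature bounded from below, and then invokes Theorem~\ref{main-manifold}. The extra remark on handling $p_0=\infty$ by exhausting with finite $p_0'$ is a small, correct piece of bookkeeping that the paper leaves implicit.
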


Note that in particular any compact metric perturbation satisfies $(GD)$.
\begin{cor}\label{cor-smooth}
Suppose that $(M,g)$ and $(M,g_0)$ satisfy $(D)$ and $(GUB)$, and have Ricci curvature bounded from below.
If $g$ coincides with  $g_0$ outside a compact subset,  then for all $p_0\in (2,\infty]$,
$\nabla_0 \mathcal{L}_{0}^{-1/2}$ is bounded on $L^p(M,\mu_0)$ for all $p\in (2,p_0)$,
if and only if, $\nabla \mathcal{L}^{-1/2}$  is bounded on $L^p(M,\mu)$ for all $p\in (2,p_0)$.
\end{cor}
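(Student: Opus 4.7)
The plan is to reduce the corollary to Corollary \ref{cor-lowerRicci} applied in both directions. The only nontrivial input required is verifying the hypothesis $(GD)$ (and its symmetric version with roles of $g_0,g$ swapped) and the comparability \eqref{quasi-isometry}; everything else in Corollary \ref{cor-lowerRicci} is part of the present hypotheses. Let $K\subset M$ be a compact set outside which $g\equiv g_0$. On $K$ the two smooth positive-definite tensors are uniformly comparable by compactness; outside $K$ they coincide; hence \eqref{quasi-isometry} holds.

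Next I would verify $(GD)$. Since $\delta_{ik}-g_0^{ij}g_{jk}$ is supported in $K$ and uniformly bounded there, we have $\int_{B_0(x,r)}|\delta_{ik}-g_0^{ij}g_{jk}|\,d\mu_0\le C_K\mu_0(K)=:A<\infty$ for every ball; consequently
\[
\fint_{B_0(x,r)}|\delta_{ik}-g_0^{ij}g_{jk}|\,d\mu_0 \le \frac{A}{V_0(x,r)}.
\]
When $B_0(x,r)\cap K=\emptyset$ the integrand is zero and there is nothing to prove, so the task reduces to proving a uniform polynomial lower bound $V_0(x,r)\gtrsim r^\sigma$ for $r>1$ under the condition $B_0(x,r)\cap K\neq\emptyset$.

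To establish this lower bound, pick any $y\in B_0(x,r)\cap K$. Then $B_0(y,r)\subset B_0(x,2r)$, and the doubling condition $(D)$ gives $V_0(x,r)\gtrsim V_0(x,2r)\ge V_0(y,r)$. Since $(M,g_0)$ is connected, non-compact and doubling, the standard reverse doubling inequality yields $\sigma>0$ with $V_0(y,r)\ge c\,r^{\sigma}V_0(y,1)$ for $r\ge 1$; by continuity of $y\mapsto V_0(y,1)$ on the compact set $K$, $\inf_{y\in K}V_0(y,1)>0$. Combining these gives $V_0(x,r)\gtrsim r^\sigma$ uniformly, which is $(GD)$ with $\epsilon=\sigma$. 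The same argument applies verbatim with $g$ and $g_0$ interchanged because $K$ is also compact in $g$ and $(M,g)$ enjoys the same structural hypotheses.

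With $(GD)$ and comparability verified in both directions, the forward implication follows by applying Corollary \ref{cor-lowerRicci} with perturbing metric $g_0$ of reference metric $g$, and the reverse implication follows by applying the same corollary with the roles swapped; the hypotheses $(D)$, $(GUB)$ and the lower Ricci bound on the target manifold are exactly those assumed for both $(M,g)$ and $(M,g_0)$. The main obstacle I anticipate is obtaining the polynomial volume lower bound uniformly in $x$; reverse doubling provides the $r^\sigma$ growth, while compactness of $K$ prevents $V_0(y,1)$ from degenerating as $y$ ranges over the support of the perturbation, and without this compactness the argument would break (cf.\ the collapsing phenomena discussed after Theorem \ref{main-manifold}).
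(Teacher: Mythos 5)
Your proposal is correct and follows essentially the same route as the paper: verify quasi-isometry and the decay condition $(GD)$ for a compactly supported perturbation via reverse doubling (the paper anchors the volume lower bound at a fixed $x_0\in M_0$ whereas you use $\inf_{y\in K}V_0(y,1)>0$, but these are interchangeable), then invoke Theorem~\ref{main-manifold}/Corollary~\ref{cor-lowerRicci} in both directions. No gaps.
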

Carron \cite{ca07} and Devyver \cite{de15} had addressed the question of stability of compact perturbation,
under the validity of global Sobolev inequality instead of $(D)$ and $(GUB)$. The global Sobolev inequality in general is a stronger
requirement than $(D)$ and $(GUB)$; see \cite[Remark 1.1]{de15}.  On the other hand, the changing of
the topology of manifolds is out of reach is this work, which however is allowed in \cite{ca07,de15},
see also \cite{ji17}.

An easy consequence follows for the case of non-negative Ricci curvature.
\begin{cor}\label{cor-nonnegative}
Assume that $(M,g_0)$ has non-negative Ricci curvature.
If $g$ coincides with  $g_0$ outside a compact subset, then
the Riesz transform $\nabla \mathcal{L}^{-1/2}$ is bounded on $L^p(M,\mu)$ for all $p\in (1,\infty)$.
\end{cor}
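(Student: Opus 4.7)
The plan is to reduce the claim to Corollary \ref{cor-lowerRicci} (applied with $p_0=\infty$) together with the $p\in(1,2)$ theorem of Coulhon--Duong, by verifying the hypotheses of those two results in the present setting. Since $(M,g_0)$ has non-negative Ricci curvature, Bishop--Gromov yields $(D)$ and Li--Yau yields $(GUB)$ on $(M,g_0)$. Because $g$ coincides with $g_0$ outside a compact set $K$ and any two smooth metrics are uniformly comparable on $K$, the quasi-isometry \eqref{quasi-isometry} is immediate, and $(M,g)$ has Ricci curvature bounded from below (it equals that of $g_0$ outside $K$ and is continuous on $K$). As noted in the introduction, $(D)$ and $(SI)$, hence $(D)$ and $(GUB)$, are invariant under quasi-isometries, so they also hold on $(M,g)$.

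Next I would verify $(GD)$. The key point is that $\delta_{ik}-g_0^{ij}g_{jk}$ is supported in $K$ and uniformly bounded (since $g_0, g$ are comparable and smooth), so
\begin{equation*}
\int_{B_0(x,r)}|\delta_{ik}-g_0^{ij}g_{jk}|\,d\mu_0\le C_K,
\end{equation*}
uniformly in $x$ and $r$; this already gives $(GD)$ for $r$ in any bounded range. For $r$ large and $x$ with $d_0(x,K)\ge 2r$, the integral vanishes. For the remaining $x$ with $d_0(x,K)<2r$, fix once and for all a base point $x_0\in K$; then $d_0(x,x_0)\le 2r+\diam_{g_0}(K)\le 3r$ for $r$ sufficiently large, so $B_0(x_0,r)\subset B_0(x,4r)$, and $(D)$ yields $V_0(x,r)\gtrsim V_0(x_0,r)$. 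Yau's theorem on linear volume growth for non-compact manifolds of non-negative Ricci curvature then provides $V_0(x_0,r)\gtrsim r$ for $r$ large, so
\begin{equation*}
\fint_{B_0(x,r)}|\delta_{ik}-g_0^{ij}g_{jk}|\,d\mu_0\le \frac{C}{r},\qquad r>1,
\end{equation*}
uniformly in $x$, which is $(GD)$ with $\epsilon=1$.

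To close the argument I would invoke Bakry's theorem: on a complete manifold of non-negative Ricci curvature, the Riesz transform $\nabla_0\mathcal{L}_0^{-1/2}$ is bounded on $L^p(M,\mu_0)$ for every $p\in(1,\infty)$. With this input, Corollary \ref{cor-lowerRicci} applied at $p_0=\infty$ delivers $L^p$-boundedness of $\nabla\mathcal{L}^{-1/2}$ for $p\in(2,\infty)$. The case $p=2$ is the spectral-theoretic isometry, and $p\in(1,2)$ follows from the Coulhon--Duong theorem applied on $(M,g)$ using the $(D)$ and $(GUB)$ already established above.

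The step I expect to require the most care is the uniform-in-$x$ volume lower bound underlying $(GD)$: Yau's linear growth supplies $V_0(x_0,r)\gtrsim r$ only with a constant that a priori depends on $x_0$, so one must exploit doubling to transfer the bound from a single fixed $x_0\in K$ to all $x$ whose balls meet $K$. Once this is settled, the rest of the proof is a clean chaining of Corollary \ref{cor-lowerRicci}, Bakry's theorem, and Coulhon--Duong.
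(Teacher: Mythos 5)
Your proof is correct and follows essentially the same chain as the paper (verify $(D)$, $(GUB)$, lower Ricci bound and $(GD)$, then feed these into Theorem \ref{main-manifold} via a corollary, and handle $p\in(1,2)$ by Coulhon--Duong). The one genuine variation is in the verification of $(GD)$: the paper routes through Corollary \ref{cor-smooth}, whose proof obtains the needed volume lower bound $V_0(x,r)\gtrsim r^{\upsilon}$ for a possibly small exponent $\upsilon>0$ from the general reverse-doubling inequality (which follows from $(D)$ plus connectivity), whereas you invoke Yau's linear volume growth $V_0(x_0,r)\gtrsim r$, a sharper bound that is special to non-negative Ricci curvature and hands you $(GD)$ with $\epsilon=1$. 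Since the exponent $\epsilon$ only needs to be positive, both are adequate; the paper's route has the small advantage of factoring the argument through a statement (Corollary \ref{cor-smooth}) that applies to all manifolds with lower Ricci bound, while yours is marginally more self-contained for this specific corollary. Your observation that the uniformity in $x$ of the volume lower bound must be transferred from a fixed $x_0\in K$ via doubling is exactly the point the paper also addresses (through the reverse-doubling estimate at $x$ and comparison with $V_0(x_0,\diam M_0)$), and you have handled it correctly.
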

Zhang \cite{zhang06} had derived a sufficient condition on the perturbation of a manifold with non-negative
Ricci curvature for the stability of Yau's estimate (equivalent to $(RH_p)$ with $p=\infty$, cf. \cite{chy,cjks16,ya75}),
which implies the boundedness of the Riesz transform for all $p\in (1,\infty)$ by \cite[Theorem 1.9]{cjks16}.
We did not prove the stability of Yau's estimate, but the advantage of our result is that our
condition $(GD)$ is much more explicit and, it is convenient for applications.

Let us mention a few examples that our result can apply. Besides manifolds with non-negative Ricci
curvature, conic manifolds (cf. \cite{lh99,lin96}), as well as co-compact covering manifold with
polynomial growth deck transformation group (cf. \cite{dng04a}), Lie groups of polynomial growth (cf.
\cite{Al92,var88}) satisfy the doubling condition $(D)$ and $(GUB)$. Indeed the stronger Li-Yau
estimate is true (see Theorem \ref{metric-perturbation-ly} below). By \cite{gri-sal09}, $(GUB)$ is
preserved under gluing operation; see \cite{cch06,ji17} for studies of Riesz transforms in this
direction. Therefore, our result applies to these settings if the metric perturbation satisfies
$(GD)$. Our result also applies on the Euclidean space $\rr^n$ for elliptic operators (including
degenerate operators); see Theorem \ref{main-denegerate}.

{
Finally let us state a corollary for the Euclidean case. The balls $B(x,r)$ in the following
corollary are induced by the standard Euclidean metric.
\begin{cor}\label{cor-euclidean}
Let $\mathcal{L}_0=-\mathrm{div}_{\rn}(A_0\nabla_{\rn}) $, $\mathcal{L}=-\mathrm{div}_{\rn}(A\nabla_{\rn}) $ be uniformly elliptic operators on $\rn$, $n\ge 2$, with $A_0,A$ being uniformly continuous
on $\rn$.  Suppose that there exists $\epsilon>0$ such that
$$\fint_{B(y,r)} \left|A-A_0\right|\,dx\le \frac{C}{r^\epsilon},\quad\, \forall \,y\in\rn \, \& \, \forall \, r>1.$$
Then for $p\in (2,\infty)$,  $\nabla_{\rn}\mathcal L_0^{-1/2}$ is bounded on $L^p(\rn)$  if and only if  $\nabla_{\rn}\mathcal L^{-1/2}$ is bounded on $L^p(\rn)$.
\end{cor}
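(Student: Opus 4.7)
The plan is to deduce the corollary from Theorem \ref{main-manifold} by viewing the coefficient matrices $A_0, A$ as (inverse) Riemannian metric tensors on $\rn$, namely $g_0^{ij} = (A_0)^{ij}$ and $g^{ij} = A^{ij}$. One technical caveat is that $\mathcal{L}_0, \mathcal{L}$ act on $L^p(\rn, dx)$, while the associated Laplace--Beltrami operators act on $L^p(\rn, d\mu_g)$ with $d\mu_g = (\det A)^{-1/2}\,dx$; however, uniform ellipticity makes the two measures comparable, so $L^p$-boundedness of the Riesz transforms is insensitive to the change. This is the sense in which the Euclidean corollary is the divergence-form specialization of Theorem \ref{main-manifold} (cf.\ Theorem \ref{main-denegerate}).

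Once this identification is in place, I would verify the hypotheses of Theorem \ref{main-manifold}. The comparability \eqref{quasi-isometry} follows at once from the uniform ellipticity of both $A_0$ and $A$. The doubling condition $(D)$ is immediate since $g_0$-balls are Lipschitz-equivalent to Euclidean balls and Lebesgue measure is doubling. The Gaussian upper bound $(GUB)$ for both $\mathcal{L}_0$ and $\mathcal{L}$ is the classical Aronson estimate for divergence-form operators with bounded measurable coefficients. For $(GD)$, observe that $\delta_{ik} - g_0^{ij} g_{jk}$ is the $(i,k)$-entry of the matrix $I - A_0 A^{-1} = (A - A_0) A^{-1}$, which is bounded entry-wise by $C|A - A_0|$ because $\|A^{-1}\|_\infty \le C$ by ellipticity. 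Thus the decay hypothesis $\fint_{B(y,r)} |A - A_0|\, dx \le C r^{-\epsilon}$ translates directly to $(GD)$, the interchange between $g_0$-balls and Euclidean balls being legitimate by Lipschitz equivalence.

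The last hypothesis of Theorem \ref{main-manifold} is the boundedness of the local Riesz transform $\nabla_{\rn}(\mathcal{L} + 1)^{-1/2}$ (and likewise for $\mathcal{L}_0$) on $L^p(\rn)$ for every $p \in (2, \infty)$. By the perturbation theorem of Caffarelli and Peral \cite{cp98}, uniform continuity of the coefficients guarantees that every $\mathcal{L}$-harmonic function satisfies $(RH_p)$ on every ball of radius $r < 1$, for every $p < \infty$; the characterization of \cite{cjks18} then yields the required local boundedness. With all assumptions in place, and being symmetric in the roles of $g_0$ and $g$, Theorem \ref{main-manifold} applied in either direction gives the stated equivalence. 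The only real obstacle in this plan is the passage between the divergence-form operator on $(\rn, dx)$ and the Laplace--Beltrami operator on $(\rn, d\mu_g)$; once that is handled (either by direct computation using that $(\det A)^{\pm 1/2}$ is bounded, or by invoking Theorem \ref{main-denegerate}), the rest reduces to the verification of hypotheses indicated above.
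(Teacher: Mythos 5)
Your primary route, passing through Theorem~\ref{main-manifold} by setting $g^{ij}=A^{ij}$, has a genuine gap. The issue is not merely that the Lebesgue measure $dx$ differs from the Riemannian volume $d\mu_g=(\det A)^{-1/2}\,dx$; the operator $\mathcal{L}=-\mathrm{div}(A\nabla)$, self-adjoint on $L^2(\rn,dx)$, does not coincide with the Laplace--Beltrami operator $\Delta_g$, self-adjoint on $L^2(\rn,d\mu_g)$. Indeed, with $g^{ij}=A^{ij}$ one computes
$$\Delta_g f=(\det A)^{1/2}\,\partial_i\!\left((\det A)^{-1/2}A^{ij}\partial_j f\right)=\mathrm{div}(A\nabla f)-\tfrac12\,(\partial_i\log\det A)\,A^{ij}\partial_j f,$$
a first-order drift that does not vanish unless $\det A$ is constant. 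So Theorem~\ref{main-manifold}, which concerns Laplace--Beltrami operators, does not directly apply to $\mathcal{L}_0,\mathcal{L}$, and ``measure comparability makes $L^p$-boundedness insensitive'' does not address this, since one is comparing two different operators, not the same operator on two comparable measures.

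The correct reduction, which you mention only in passing as a fallback, is to apply Theorem~\ref{main-denegerate} with $w=w_0\equiv 1$: that theorem is stated exactly for operators of the form $-\frac1w\mathrm{div}(wA\nabla)$, so the Euclidean divergence-form case is its literal specialization, and this is precisely the paper's proof. Your verifications of the hypotheses are correct and are the ones needed there: uniform ellipticity gives comparability; the decay hypothesis on $\fint_{B(y,r)}|A-A_0|\,dx$ is exactly what Theorem~\ref{main-denegerate} requires (your observation that $I-A_0A^{-1}=(A-A_0)A^{-1}$ is controlled by $|A-A_0|$ would be the analogous $(GD)$-verification had you used Theorem~\ref{main-manifold}); and the $L^p$-boundedness of the local Riesz transforms $\nabla(1+\mathcal{L})^{-1/2}$, $\nabla(1+\mathcal{L}_0)^{-1/2}$ follows, as you say, from Caffarelli--Peral \cite{cp98} combined with \cite{acdh,cjks18}. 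Replacing the appeal to Theorem~\ref{main-manifold} by the appeal to Theorem~\ref{main-denegerate} closes the gap and recovers the paper's argument.
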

}

The paper is organized as follows. In Section \ref{sec-manifold}, we study the case of manifolds and prove Theorem \ref{main-manifold}
and its corollaries. In Section \ref{sec-degenerate} we discuss the case of degenerate elliptic equations on $\rn$.
In Section \ref{example}, we discuss the conic Laplace operators and present the proof of Theorem \ref{counter-example} there.
In Appendix \ref{appendix}, we recall some basic facts regarding the boundedness of functional operators.

\section{Metric perturbation on manifolds}\label{sec-manifold}
\hskip\parindent In this section, we study the behavior of Riesz transform under metric perturbation on manifolds.

Note that as $g,g_0$ are comparable on $M$, the resulting Riemannian volumes $\mu$ and $\mu_0$ are also comparable, which implies that for any $p\in [1,\infty]$
$$L^p(M,\mu)=L^p(M,\mu_0),$$
also the boundedness of $\nabla \mathcal{L}_{0}^{-1/2}$, $\nabla \mathcal{L}^{-1/2}$ on $L^p(M,\mu)$, is equivalent
to the boundedness of $\nabla_0 \mathcal{L}_{0}^{-1/2}$, $\nabla_0 \mathcal{L}^{-1/2}$ on $L^p(M,\mu_0)$, respectively.
In what follows, we shall simply denote by $L^p(M)$ the Lebesgue space $L^p(M,\mu)$ or $L^p(M,\mu_0)$,
and denote by $\|\cdot\|_p$, $\|\cdot\|_{p\to p}$ the $L^p(M)$ norm and
the operator norm $\|\cdot\|_{L^p(M)\to L^p(M)}$, respectively.

As the consequence of  \eqref{quasi-isometry} also, one sees that the condition $(GD)$ is equivalent to
$$\fint_{B_0(x,r)}|\delta_{ik}-g_0^{ij}g_{jk}|\,d\mu_0\sim \fint_{B(x,r)}|\delta_{ik}-g_0^{ij}g_{jk}|\,d\mu\le Cr^{-\epsilon},\
\forall\ 1\le i,k\le n, \forall x\in M\,\&\,\forall \,r>1.$$

Let us outline the proof of Theorem \ref{main-manifold}, which we follow the approach in \cite{cdn07}.
Note that our main ingredients are Proposition \ref{metric-lem-1} and Proposition \ref{metric-lem-2}  below.
\begin{proof}[Proof of Theorem \ref{main-manifold}]
To show that $\nabla \mathcal{L}^{-1/2}$ is bounded on $L^p(M)$ for $p\in (2,p_0)$, it suffices to show that
\begin{equation}\label{metric-main}
\|\nabla \mathcal{L}_0^{-1/2}-\nabla \mathcal{L}^{-1/2}\|_{p\to p}\le C.
\end{equation}

We write
\begin{eqnarray*}
\nabla \mathcal{L}_0^{-1/2}-\nabla \mathcal{L}^{-1/2}&&=\frac{1}{\pi}\int_0^1 \nabla[(1+t\mathcal{L}_0)^{-1}-(1+t\mathcal{L})^{-1}]\frac{\,dt}{\sqrt t}\\
&&\quad+\frac{1}{\pi}\int_1^\infty\nabla[(1+t\mathcal{L}_0)^{-1}-(1+t\mathcal{L})^{-1}]\frac{\,dt}{\sqrt t}.
\end{eqnarray*}
By Lemma \ref{lemma-s1} and that $\nabla \mathcal{L}_{0}^{-1/2}$ and $\nabla (1+\mathcal{L})^{-1/2}$ are bounded on
$L^p(M)$, one has
\begin{equation}\label{metric-7}
\left\|\frac{1}{\pi}\int_0^1 \nabla[(1+t\mathcal{L}_0)^{-1}-(1+t\mathcal{L})^{-1}]\frac{\,dt}{\sqrt t}\right\|_{p\to p}\le C.
\end{equation}

For the remaining term, by the following Proposition \ref{metric-lem-1} and Proposition \ref{metric-lem-2},
we see that there exists $\alpha>0$ such that
\begin{equation}\label{metric-2}
\|\nabla[(1+t\mathcal{L}_0)^{-1}-(1+t\mathcal{L})^{-1}]\|_{p\to p}\le Ct^{-\alpha}\|\nabla(1+t\mathcal{L})^{-1/2}\|_{p\to p}.
\end{equation}
Note here we may take $\alpha=\min\{\frac{\epsilon(p-1)}{2p},\frac{\epsilon(p_0-p)}{2p(p_0+p)}\}=\frac{\epsilon(p_0-p)}{2p(p_0+p)}$ if $p_0<\infty$, and $\alpha=\frac{\epsilon}{2p}$ when $p_0=\infty$.

By using the boundedness of the local Riesz transform
$$\|\nabla(1+\mathcal{L})^{-1/2}\|_{p\to p}\le C,$$
one obtains for any $t>1$ that
$$\|\nabla(1+t\mathcal{L})^{-1}\|_{p\to p}\le
\|\nabla(1+\mathcal{L})^{-1/2}\|_{p\to p}\|(1+\L)^{1/2}(1+t\mathcal{L})^{-1}\|_{p\to p}\le C.$$
This together with Lemma \ref{lemma-s2} implies that
$$\|\nabla(1+t\mathcal{L})^{-1/2}\|_{p\to p}\le C.$$
Inserting this into \eqref{metric-2}, one finds
\begin{equation*}
\|\nabla[(1+t\mathcal{L}_0)^{-1}-(1+t\mathcal{L})^{-1}]\|_{p\to p}\le Ct^{-\alpha},
\end{equation*}
and
\begin{equation}\label{metric-6}
\|\nabla(1+t\mathcal{L})^{-1}]\|_{p\to p}\le Ct^{-\alpha}+\|\nabla(1+t\mathcal{L}_0)^{-1}\|_{p\to p}\le Ct^{-\alpha\wedge 1/2}.
\end{equation}
Above we used the fact
\begin{eqnarray*}
\|\nabla(1+t\mathcal{L}_0)^{-1}\|_{p\to p}&&\le \|\nabla {L}_0^{-1/2}\|_{p\to p}\|{L}_0^{1/2}(1+t\mathcal{L}_0)^{-1}\|_{p\to p}\\
&&\le C\left\|\int_0^\infty {L}_0^{1/2}e^{-s-st\mathcal{L}_0}\,ds \right\|_{p\to p}\\
&&\le C\int_0^\infty \frac{e^{-s}}{\sqrt{st}}\,ds\le Ct^{-1/2}. \\
\end{eqnarray*}
Now inserting \eqref{metric-6} into \eqref{metric-2}, we get
\begin{equation*}
\|\nabla[(1+t\mathcal{L}_0)^{-1}-(1+t\mathcal{L})^{-1}]\|_{p\to p}\le Ct^{-\alpha-\alpha\wedge 1/2},
\end{equation*}
and
\begin{equation*}
\|\nabla(1+t\mathcal{L})^{-1}]\|_{p\to p}\le Ct^{-2\alpha \wedge 1/2}.
\end{equation*}
Repeating this argument finitely times (depending on $\alpha$), we arrive at
\begin{equation}\label{metric-3}
\|\nabla[(1+t\mathcal{L}_0)^{-1}-(1+t\mathcal{L})^{-1}]\|_{p\to p}\le Ct^{-\alpha-1/2}, \forall\,t>1,
\end{equation}
and
\begin{equation}\label{metric-4}
\|\nabla(1+t\mathcal{L})^{-1}]\|_{p\to p}\le Ct^{-1/2}.
\end{equation}
Inserting \eqref{metric-3} into the term II, we conclude that
\begin{eqnarray}\label{metric-8}
\left\|\frac{1}{\pi}\int_1^\infty\nabla[(1+t\mathcal{L}_0)^{-1}-(1+t\mathcal{L})^{-1}]\frac{\,dt}{\sqrt t}\right\|_{p\to p}\le C\int_1^\infty t^{-\alpha-1/2}\frac{\,dt}{\sqrt t}\le C.
\end{eqnarray}
Combining the estimates of \eqref{metric-7} and \eqref{metric-8}, we get
$$\|\nabla \mathcal{L}_0^{-1/2}-\nabla \mathcal{L}^{-1/2}\|_{p\to p}\le C,$$
and hence,
$$\|\nabla \mathcal{L}^{-1/2}\|_{p\to p}\le C,$$
as desired.
\end{proof}

Let us estimate the difference $\nabla[(1+t\mathcal{L}_0)^{-1}-(1+t\mathcal{L})^{-1}]$ for $t>1$ to show
\eqref{metric-2}.

%\begin{thm}\label{grad-heat}
% Assume that $(M,g)$ satisfies $(D)$ and $(GUB)$.
% Let $p_0\in (2,\infty]$. Then the following statements are equivalent:
%
%(i) For any $p\in (2,p_0)$, there exists $\gz>0$ such that
%$$\int_M|\nabla_x p_t(x,y)|^{p}\exp\left\{\gz d^2(x,y)/t\right\}\,d\mu(x)\le \frac{C}{t^{{p}/2}V(y,\sqrt t)^{{p}-1}} \leqno (GLY_{p})$$
%for all $t>0$ and a.e. $y\in X$.
%
%(ii) For any $p\in (2,p_0)$, the gradient of the heat semigroup $|\nabla H_t|$ is bounded on $L^{p}(M)$ for each $t>0$ with
%$$\||\nabla H_t|\|_{p\to p}\le \frac{C}{\sqrt t}.\leqno (G_{p})$$
%\end{thm}

Set $|g|=|\det(g_{ij})|$ and $|g_0|=|\det(g_0)_{ij}|.$
Let us begin with the formula
\begin{eqnarray*}
\L_0-\L&&=\frac{1}{\sqrt{|g|}}\partial_i\left(\sqrt{|g|}g^{ij}\partial_j\right)
-\frac{1}{\sqrt{|g_0|}}\partial_i\left(\sqrt{|g_0|}g_0^{ij}\partial_j\right)\\
&&=\frac{1}{\sqrt{|g|}}\partial_i\left(\left[\sqrt{|g|}g^{ij}-
\sqrt{|g_0|}g_0^{ij}\right]\partial_j\right)-\frac{\sqrt{|g|}-\sqrt{|g_0|}}
{\sqrt{|g_0|}\sqrt{|g|}}\partial_i\left(\sqrt{|g_0|}g_0^{ij}\partial_j\right),
\end{eqnarray*}
and for any $t>0$,
\begin{eqnarray*}
(1+t\mathcal{L})^{-1}-(1+t\mathcal{L}_0)^{-1}=t(1+t\mathcal{L})^{-1}\left(\L_0-\L\right)(1+t\mathcal{L}_0)^{-1}.
\end{eqnarray*}
Set
\begin{eqnarray*}
I_t^1&&= t\nabla(1+t\mathcal{L})^{-1}\left(\frac{1}{\sqrt{|g|}}\partial_i\left(\left[\sqrt{|g|}g^{ij}-
\sqrt{|g_0|}g_0^{ij}\right]\partial_j\right)\right)(1+t\mathcal{L}_0)^{-1}\\
&& =t\nabla(1+t\mathcal{L})^{-1}\left(\frac{1}{\sqrt{|g|}}\partial_i\left(\left[\sqrt{|g|}(\delta_{ik}-g_0^{ij} g_{jk})g^{kj}\right]\partial_j\right)-
\frac{1}{\sqrt{|g|}}\partial_i\left(\left[\sqrt{|g_0|}-\sqrt{|g|}\right]g_0^{ij}\partial_j\right)\right)(1+t\mathcal{L}_0)^{-1}
\end{eqnarray*}
and
\begin{eqnarray*}
II_t^2&&=t\nabla(1+t\mathcal{L})^{-1}\left(\frac{\sqrt{|g|}-\sqrt{|g_0|}}
{\sqrt{|g_0|}\sqrt{|g|}}\partial_i\left(\sqrt{|g_0|}g_0^{ij}\partial_j\right)\right)(1+t\mathcal{L}_0)^{-1}\\
&&= t\nabla(1+t\mathcal{L})^{-1}\left(1-\sqrt{{|g_0|}/{|g|}}\right)\L_0(1+t\mathcal{L}_0)^{-1}.
\end{eqnarray*}

Note that \eqref{quasi-isometry} implies that $C^{-1}|g|\le |g_0|\le C|g|$.
Moreover, from the assumption
$$\fint_{B_0(x,r)}|\delta_{ik}-g_0^{ij} g_{jk}|\,d\mu_0\le Cr^{-\epsilon},\ \forall\ 1\le i,k\le n, \ \forall x\in M\,\&\,\forall \,r>1,$$
for some $\epsilon>0$, one deduces that for all $x\in M$ and all $r>1$
\begin{equation}\label{pol-decay}
\fint_{B_0(x,r)}|1-\det (g_0^{ij} g_{jk})|\,d\mu_0=\fint_{B_0(x,r)}\left|1-\frac{|g|}{|g_0|}\right|\,d\mu_0
\sim \fint_{B(x,r)}\left|1-\frac{|g|}{|g_0|}\right|\,d\mu\le Cr^{-\epsilon}.
\end{equation}

\begin{prop}\label{metric-lem-1}
Assume that $(M,g_0)$ satisfies $(D)$ and $(GUB)$, and that \eqref{quasi-isometry} holds.
Suppose that  there exists $\epsilon>0$ such that
$$\fint_{B_0(x,r)}|\delta_{ik}-g_0^{ij} g_{jk}|\,d\mu_0\le Cr^{-\epsilon},\ \forall\ 1\le i,k\le n, \ \forall x\in M\,\&\,\forall \,r>1.$$
Then for any $p\in (2,\infty)$, there exists $C>0$ such that for each $t>1$
$$\|II_t^2\|_{p\to p}\le Ct^{-\epsilon(p-1)/2p}\|\nabla(1+t\mathcal{L})^{-1/2}\|_{p\to p}.$$
\end{prop}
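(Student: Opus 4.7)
The plan is to isolate the factor $\nabla(1+t\mathcal{L})^{-1/2}$ on the left of $II_t^2$ and reduce the estimate to an $L^p$-bound on a scalar operator with polynomial decay in $t$. Using the identity $t\mathcal{L}_0(1+t\mathcal{L}_0)^{-1}=I-(1+t\mathcal{L}_0)^{-1}$, I write
$$II_t^2 = \nabla(1+t\mathcal{L})^{-1/2}\cdot T_t, \qquad T_t := (1+t\mathcal{L})^{-1/2}\phi\bigl[I-(1+t\mathcal{L}_0)^{-1}\bigr],$$
where $\phi = 1-\sqrt{|g_0|/|g|}$ is bounded ($\|\phi\|_\infty\le C$) by \eqref{quasi-isometry}, and $\fint_{B(x,r)}|\phi|\,d\mu \le Cr^{-\epsilon}$ for $r>1$ by \eqref{pol-decay}. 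It thus suffices to establish
$$\|T_t\|_{p\to p}\le Ct^{-\epsilon(p-1)/(2p)},\qquad t>1,\ p\in(2,\infty),$$
after which submultiplicativity $\|II_t^2\|_{p\to p}\le \|\nabla(1+t\mathcal{L})^{-1/2}\|_{p\to p}\|T_t\|_{p\to p}$ yields the claim.

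The essential analytic input is the pointwise heat-semigroup estimate
$$e^{-s\mathcal{L}}|\phi|(x)\le Cs^{-\epsilon/2},\qquad \forall\,x\in M,\ \forall\,s>1,$$
obtained by dyadic decomposition of $\int p_s(x,y)|\phi(y)|\,d\mu(y)$ over annuli $B(x,2^k\sqrt{s})\setminus B(x,2^{k-1}\sqrt{s})$: the bound $(GUB)$ supplies the factor $e^{-c4^{k-1}}$, doubling $(D)$ provides $V(x,2^k\sqrt{s})\le C\cdot 2^{kN}V(x,\sqrt{s})$, and $(GD)$ gives $\fint_{B(x,2^k\sqrt{s})}|\phi|\,d\mu \le C(2^k\sqrt{s})^{-\epsilon}$ (since $2^k\sqrt{s}>1$). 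The Gaussian tail dominates the polynomial $2^{k(N-\epsilon)}$, making the sum convergent. Integrating against $e^{-s}\,ds$ over $s\in(0,\infty)$ (splitting at $s=1/t$) similarly yields $(1+t\mathcal{L})^{-1}|\phi|(x)\le Ct^{-\epsilon/2}$ for $t>1$.

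From this I derive two endpoint estimates. For the $L^\infty$ bound, $(1+t\mathcal{L}_0)^{-1}$ is uniformly bounded on $L^\infty$ (by $(D)$ and $(GUB)$ for $\mathcal{L}_0$ via quasi-isometry), so $g:=(I-(1+t\mathcal{L}_0)^{-1})f$ satisfies $\|g\|_\infty\le C\|f\|_\infty$; then the subordination formula $(1+t\mathcal{L})^{-1/2}=\pi^{-1/2}\int_0^\infty s^{-1/2}e^{-s}e^{-st\mathcal{L}}\,ds$ combined with the pointwise estimate above (splitting the $s$-integral at $1/t$) yields $\|T_t\|_{\infty\to\infty}\le Ct^{-\epsilon/2}$. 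For the $L^2$ bound, the self-adjointness identity $\|(1+t\mathcal{L})^{-1/2}\phi\|_{2\to 2}^2=\|\phi(1+t\mathcal{L})^{-1}\phi\|_{2\to 2}$ reduces matters to Schur's test on the symmetric kernel $\phi(x)R_t(x,y)\phi(y)$, with $R_t(x,y)=\int_0^\infty e^{-s}p_{st}(x,y)\,ds$; using $(1+t\mathcal{L})^{-1}|\phi|(x)\le Ct^{-\epsilon/2}$ and $\|\phi\|_\infty\le C$, this delivers $\|\phi(1+t\mathcal{L})^{-1}\phi\|_{2\to 2}\le Ct^{-\epsilon/2}$, hence $\|T_t\|_{2\to 2}\le Ct^{-\epsilon/4}$ after discarding the $L^2$-bounded factor $I-(1+t\mathcal{L}_0)^{-1}$.

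Finally, Riesz--Thorin interpolation between the $L^2$ bound $Ct^{-\epsilon/4}$ and the $L^\infty$ bound $Ct^{-\epsilon/2}$, with $\theta=1-2/p$, gives the composite exponent $\tfrac{\epsilon}{4}(1-\theta)+\tfrac{\epsilon}{2}\theta=\epsilon(p-1)/(2p)$, completing the proof. The main obstacle is the pointwise heat-semigroup estimate on $|\phi|$: this is the single step where all three hypotheses $(D)$, $(GUB)$, and $(GD)$ interact, and without it the rest of the scheme (Schur test, subordination, interpolation) has nothing to feed on.
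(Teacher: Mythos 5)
Your proposal is correct and arrives at the same exponent $\epsilon(p-1)/(2p)$ as the paper, but by a genuinely different route. The paper keeps the $\|\mathcal{L}_0(1+t\mathcal{L}_0)^{-1}\|_{p\to p}\le C/t$ factor explicit and obtains $\|(1+t\mathcal{L})^{-1/2}\phi\|_{p\to p}\le Ct^{-\epsilon/(2p')}$ by applying H\"older's inequality \emph{inside} the Gaussian kernel integral: the kernel is split as $p_{st}^{1/p}p_{st}^{1/p'}$, the $|\phi|^{p'}$ piece is reduced to the $L^1$ average using $\|\phi\|_\infty^{p'-1}$, and a dyadic sum does the rest. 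Your approach replaces this pointwise H\"older step with a cleaner two-endpoint scheme: a sup-norm decay $\|T_t\|_{\infty\to\infty}\le Ct^{-\epsilon/2}$ coming from the pointwise bound $e^{-s\mathcal{L}}|\phi|\le Cs^{-\epsilon/2}$ (where $(D)$, $(GUB)$, $(GD)$ interact, exactly as in the paper), and an $L^2$ decay $\|T_t\|_{2\to2}\le Ct^{-\epsilon/4}$ via $\|S\|_{2\to2}^2=\|S^*S\|_{2\to2}$ and Schur's test on the symmetric kernel $\phi\, R_t\,\phi$, then Riesz--Thorin to fill in $p\in(2,\infty)$. The arithmetic checks out: with $\theta=1-2/p$ the interpolated exponent is $\tfrac{\epsilon}{4}(1-\theta)+\tfrac{\epsilon}{2}\theta=\tfrac{\epsilon}{4}(1+\theta)=\tfrac{\epsilon(p-1)}{2p}$, and at $p=2$ and $p=\infty$ the two methods give identical rates $t^{-\epsilon/4}$ and $t^{-\epsilon/2}$, confirming they encode the same estimate. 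Two minor points: the uniform boundedness of $(1+t\mathcal{L}_0)^{-1}$ on $L^\infty$ follows simply from the Markov property of the heat semigroup and does not actually need $(D)$ or $(GUB)$; and both proofs implicitly assume $\epsilon<1$ to make the $s$-integrals converge near $s=0$, which is harmless since $(GD)$ with a given $\epsilon$ implies $(GD)$ with any smaller $\epsilon$. The trade-off is that your route is more modular and makes the endpoint exponents transparent, at the cost of invoking duality/Schur and Riesz--Thorin; the paper's H\"older argument is shorter and would also cover $p\in(1,2)$ directly (not needed here).
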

\begin{proof}
{\bf Step 1.} We claim that it holds for each $t>1$ that
$$\left\|(1+t\mathcal{L})^{-1/2}\left(1-\sqrt{{|g_0|}/
{|g|}}\right)\right\|_{p\to p}\le Ct^{-\epsilon(p-1)/2p}.$$
For any $f\in C^\infty_c(M)$ one has
\begin{eqnarray*}
(1+t\mathcal{L})^{-1/2}\left(1-\sqrt{{|g_0|}/
{|g|}}\right)f(x)&&=C\int_0^\infty e^{-s(1+t\L)}\left(1-{{|g_0|}/
{|g|}}\right)f(x)\frac{\,ds}{\sqrt s}.
\end{eqnarray*}
For the first term, by using the fact $|g|\sim |g_0|$ we conclude that
\begin{eqnarray*}
\left\|\int_0^{1/t}e^{-s(1+t\L)}\left([1-\sqrt{|g_0|/{|g|}}]f\right)(x)\frac{\,ds}{\sqrt s}\right\|_p&&\le \int_0^{1/t}
\left\|e^{-s(1+t\L)}\left((1-\sqrt{|g_0|/|g|})f\right)\right\|_p\frac{\,ds}{\sqrt s}\\
&&\le \int_0^{1/t} e^{-s}\left\|(1-\sqrt{|g_0|/
|g|})f\right\|_p\frac{\,ds}{\sqrt s}\\
&&\le \frac{C}{\sqrt t}\|f\|_p.
\end{eqnarray*}
Let $p_t(x,y)$ denote the heat kernel of $e^{-t\L}$. For the remaining estimate, note that the heat kernel of $e^{-st\L}$ satisfies
$$0<p_{st}(x,y)\le \frac{C}{V(x,\sqrt{st})}e^{-\frac{d(x,y)^2}{cst}},$$
which is a consequence of $(M,g_0)$ satisfying $(GUB)$ and $g\sim g_0$.

Using the H\"older inequality and $|g|\sim |g_0|$ again, one concludes that
\begin{eqnarray*}
&&\int_M p_{st}(x,y)\left(1-\sqrt{|g_0|/
|g|}(y)\right)|f(y)|\,d\mu(y)\\
&&\le \left(\int_M \frac{Ce^{-\frac{d(x,y)^2}{2cst}}}{V(x,\sqrt{st})}|f(y)|^p \,d\mu(y)\right)^{1/p}
\left(\int_M \frac{Ce^{-\frac{d(x,y)^2}{2cst}}}{V(x,\sqrt{st})}\left(1-\sqrt{|g_0|/
|g|}(y)\right)^{p'}\,d\mu(y)\right)^{1/p'}\\
&&\le \left(\int_M \frac{Ce^{-\frac{d(x,y)^2}{2cst}}}{V(x,\sqrt{st})}|f(y)|^p\,d\mu(y)\right)^{1/p}
\left(\sum_{k=1}^\infty \frac{e^{-c2^{2k}}}{V(x,\sqrt{st})}\int_{B(x,2^k\sqrt{st})}\left|1-\sqrt{|g_0|/
|g|}(y)\right|^{p'}\,d\mu(y)\right)^{1/p'}\\
&&\le \left(\int_M \frac{Ce^{-\frac{d(x,y)^2}{2cst}}}{V(x,\sqrt{st})}|f(y)|^p\,d\mu(y)\right)^{1/p}
\left(\sum_{k=1}^\infty \frac{e^{-c2^{k}}}{V(x,2^k\sqrt{st})}\int_{B(x,2^k\sqrt{st})}\left|1-\sqrt{|g_0|/
|g|}(y)\right|\,d\mu(y)\right)^{1/p'}\\
&&\le \frac{C}{(st)^{\epsilon/2p'}}\left(\int_M \frac{e^{-\frac{d(x,y)^2}{2cst}}}{V(x,\sqrt{st})}|f(y)|^p\,d\mu(y)\right)^{1/p},
\end{eqnarray*}
where in the last inequality we used the estimate
\begin{eqnarray}
\frac{1}{V(x,2^k\sqrt{st})}\int_{B(x,2^k\sqrt{st})}\left|\frac{\sqrt{|g|}-\sqrt{|g_0|}}{
\sqrt{|g|}}\right|\,d\mu
&&=\fint_{B(x,2^k\sqrt{st})}\frac{||g|-|g_0||}{\sqrt{|g|}\sqrt{|g|+|g_0|}}\,d\mu\nonumber\\
&&\le C\fint_{B(x,2^k\sqrt{st})}\frac{||g|-|g_0||}{|g_0|}\,d\mu\le \frac{C}{(2^k\sqrt{st})^{\epsilon}},
\end{eqnarray}
where the last estimate follows from \eqref{pol-decay}.
Using this, one deduces that
\begin{eqnarray*}
&&\left\|\int_{1/t}^\infty e^{-s(1+t\L)}\left(\left|1-\sqrt{|g_0|/
|g|}(y)\right|f\right)(x)\frac{\,ds}{\sqrt s}\right\|_p\\
&&\le C\int_{1/t}^\infty \frac{C}{(st)^{\epsilon/2p'}} e^{-s}\left(\int_M  \int_M \frac{1}{V(x,\sqrt{st})}e^{-\frac{d(x,y)^2}{2cst}}|f(y)|^p\,d\mu(y) \,d\mu(x)
\right)^{1/p}\frac{\,ds}{\sqrt s}\\
&&\le C\|f\|_{p}\int_{1/t}^\infty \frac{C}{(st)^{\epsilon/2p'}} e^{-s}\frac{\,ds}{\sqrt s}\\
&&\le Ct^{-\epsilon/2p'}\|f\|_{p},
\end{eqnarray*}
where $\epsilon\in (0,1)$, $p'\in (1,2)$. This and the estimate for the first term completes the proof of   {\bf Step 1}.

{\bf Step 2.}
Noticing  that
$$\|\L_0(1+t\L_0)^{-1}\|_{p\to p}=1/t\left\|1-(1+t\L_0)^{-1}\right\|_{p\to p}\le C/t,$$
which together with the first step gives that
\begin{eqnarray*}
\|II_t^2\|_{p\to p}\le Ct^{-\epsilon/2p'}\|\nabla(1+t\mathcal{L})^{-1/2}\|_{p\to p},
\end{eqnarray*}
which completes the proof.
\end{proof}

Recall that $\nabla,\nabla_0$, $\mathrm{div},\mathrm{div}_0$ are Riemannian gradients and divergences
induced by $g,\,g_0$, respectively.
\begin{prop}\label{metric-lem-2}
Assume that $(M,g_0)$ satisfies $(D)$ and $(GUB)$, and that \eqref{quasi-isometry} holds.
Suppose that  $\nabla \mathcal{L}_{0}^{-1/2}$ is bounded on
$L^{p_0}(M)$ for some $p_0\in (2,\infty)$, and there exists $\epsilon>0$ such that
$$\fint_{B_0(x,r)}|\delta_{ik}-g_0^{ij}g_{jk}|\,d\mu_0\le Cr^{-\epsilon},\ \forall\ 1\le i,k\le n, \ \forall x\in M\,\&\,\forall \,r>1.$$
Then for each $p\in (2,p_0)$ there exists $C>0$ such that for each $t>1$
\begin{eqnarray*}
\|I_t^1\|_{p\to p}\le Ct^{-\frac{\epsilon(p_0-p)}{2p(p_0+p)}} \|\nabla(1+t\mathcal{L})^{-1/2}\|_{p\to p}.
\end{eqnarray*}
\end{prop}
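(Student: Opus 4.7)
The strategy parallels Proposition \ref{metric-lem-1}, complicated by the fact that the ``small'' matrix $B_{ik}:=\delta_{ik}-g_0^{ij}g_{jk}$ now sits between $\mathrm{div}$ on the left (inside $(1+t\L)^{-1}$) and $\nabla_0$ on the right (inside $(1+t\L_0)^{-1}$), so its mean $L^1$-decay cannot be extracted as a scalar multiplication operator. The $L^{p_0}$-boundedness of $\nabla_0\L_0^{-1/2}$ enters via an interpolation argument between an $L^2$-decay estimate and a no-decay $L^{p_1}$ estimate for $p_1$ close to $p_0$.

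\emph{Reduction.} Expanding the bracket in the definition of $I_t^1$, the summand involving $\sqrt{|g_0|}-\sqrt{|g|}$ carries a scalar factor with mean $L^1$-decay of order $r^{-\epsilon}$ by \eqref{pol-decay}, and is controlled exactly as in Proposition \ref{metric-lem-1}. The principal piece reduces to
$$\wz I_t=\nabla(1+t\L)^{-1/2}\circ K_t,\qquad K_t:=t\,(1+t\L)^{-1/2}\,\mathrm{div}\bigl(B\,\nabla_0(1+t\L_0)^{-1}\bigr),$$
so it suffices to show $\|K_t\|_{p\to p}\le Ct^{-\alpha}$ with $\alpha=\frac{\epsilon(p_0-p)}{2p(p_0+p)}$.

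\emph{Two endpoint estimates.} Fix $p_1\in(p,p_0)$. On $L^{p_1}$: by duality, $\|(1+t\L)^{-1/2}\mathrm{div}\|_{p_1\to p_1}=\|\nabla(1+t\L)^{-1/2}\|_{p_1'\to p_1'}\le C/\sqrt{t}$, using the $L^{p_1'}$-Riesz bound of Coulhon--Duong \cite{cd99} and the standard estimate $\|\L^{1/2}(1+t\L)^{-1/2}\|_{p_1'\to p_1'}\le C/\sqrt t$; the hypothesis gives $\|\nabla_0(1+t\L_0)^{-1}\|_{p_1\to p_1}\le C/\sqrt{t}$; and $\|B\|_\infty\le C$ follows from \eqref{quasi-isometry}. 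Combining these yields the no-decay endpoint $\|K_t\|_{p_1\to p_1}\le C$. On $L^2$: pair $K_t f$ against $\phi\in L^2$, expand both resolvents via the subordination $(1+s\L)^{-1/2}=c\int_0^\infty e^{-\tau(1+s\L)}\tau^{-1/2}d\tau$, and use $(GUB)$ for $\L$ together with the heat-kernel gradient bound $|\nabla_y p^0_v(x,y)|\le CV(x,\sqrt v)^{-1}v^{-1/2}e^{-d(x,y)^2/(cv)}$ (valid since $\nabla_0\L_0^{-1/2}$ is $L^2$-bounded). Running the annular/H\"older scheme of Proposition \ref{metric-lem-1}'s Step 1 with the additional factor $|B|$, and invoking the mean $L^1$-decay $\fint_{B_0(x,r)}|B|d\mu_0\le Cr^{-\epsilon}$, produces the decay endpoint $\|K_t\|_{2\to 2}\le Ct^{-\epsilon/4}$.

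Riesz--Thorin interpolation then gives $\|K_t\|_{p\to p}\le Ct^{-\theta\epsilon/4}$ with $\theta=\frac{2(p_1-p)}{p(p_1-2)}$; choosing $p_1$ sufficiently close to $p_0$ and noting $p_0-2<p_0+p$ (as $p>2$), we obtain $\theta\epsilon/4>\frac{\epsilon(p_0-p)}{2p(p_0+p)}=\alpha$, which gives the claim. \emph{The main obstacle} is the $L^2$ decay estimate: heat-kernel gradient bounds are only available for $\L_0$ (not for $\L$), so one must keep $\nabla_0$ on the $\L_0$-side and move $\mathrm{div}$ to the $\L$-side via duality before expanding in heat kernels; the leading factor $t$ must be split as $\sqrt t\cdot\sqrt t$ so that each half generates a dimensionally correct bounded operator ($\sqrt t\,\nabla_0(1+t\L_0)^{-1}$ and $\sqrt t\,(1+t\L)^{-1/2}\mathrm{div}$), with the $t^{-\epsilon/4}$ decay then extracted from the annular average of $|B|$ at the common scale $\sqrt t$.
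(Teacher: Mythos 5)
Your reduction — writing $I_t^1 = \nabla(1+t\L)^{-1/2}\circ K_t$, using duality to obtain $\|(1+t\L)^{-1/2}\mathrm{div}\|_{p\to p}\le C/\sqrt t$, and isolating a coefficient-times-gradient operator to be estimated — is exactly the paper's decomposition (the paper's Step 1/Step 2). Where you diverge is the method of extracting decay from $B=I-A$: the paper works directly at $L^p$, applying H\"older with the intermediate exponent $q=(p+p_0)/2$ to split $|\nabla_x p^0_{st}|^p|B|^p$ between the Gaussian-weighted gradient estimate $(GLY_q)$ (available since $q<p_0$) and the mean-$L^1$ decay of $|B|$; you instead propose to prove a no-decay $L^{p_1}$ bound and a decaying $L^2$ bound, then Riesz--Thorin interpolate. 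The interpolation skeleton is sound: with the correct $L^2$ decay rate (roughly $t^{-\epsilon(p_0-2)/(4(p_0+2))}$), taking $p_1\to p_0$ does give an exponent that beats $\frac{\epsilon(p_0-p)}{2p(p_0+p)}$.

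There is, however, a genuine gap in your $L^2$ endpoint. You invoke the pointwise heat-kernel gradient bound $|\nabla_y p^0_v(x,y)|\le CV(x,\sqrt v)^{-1}v^{-1/2}e^{-d(x,y)^2/(cv)}$, asserting it is ``valid since $\nabla_0\L_0^{-1/2}$ is $L^2$-bounded.'' That is false: $L^2$-boundedness of the Riesz transform is automatic (it is an isometry on $L^2$) and yields only the Gaussian-weighted $L^2$ estimate $\int_M|\nabla_x p^0_t(x,y)|^2 e^{\gamma d^2(x,y)/t}\,d\mu_0(x)\le C\,t^{-1}V_0(y,\sqrt t)^{-1}$, i.e. $(GLY_2)$. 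A pointwise gradient bound is far stronger — it is essentially Yau's gradient estimate/$L^\infty$-Riesz control and does not follow from $(D)$, $(GUB)$, and $L^{p_0}$-Riesz boundedness for finite $p_0$. Moreover, even the weaker $(GLY_2)$ is insufficient for the $L^2$ step: to H\"older-split the product $|B|^2|\nabla_x p^0_{st}|^2$ so that the mean decay of $|B|$ can be exploited, you need higher local integrability of $|\nabla_x p^0_{st}|$, i.e. $(GLY_q)$ for some $q\in(2,p_0)$, which is exactly the estimate the paper derives from the $L^{p_0}$ hypothesis via \cite[Proposition 1.10]{acdh}. Once you use that, you are back to the paper's machinery at $p=2$, and the interpolation route becomes a valid but not genuinely more elementary alternative. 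Also, the claimed $L^2$ rate $t^{-\epsilon/4}$ would only emerge in the limit $p_0\to\infty$; for finite $p_0$ the correct rate carries a $(p_0-2)/(p_0+2)$-type factor. Fixing the $L^2$ step along these lines would make your argument complete and still yield the required exponent.
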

\begin{proof} Set $A=\{a_{ik}\}_{1\le i,k\le n}=\{g_0^{ij}g_{jk}\}_{1\le i,k\le n}$.
For simplicity of notions, we represent $I_t^1$ in term of Riemannian gradient and divergence as
\begin{eqnarray*}
I_t^1&& =t\nabla(1+t\mathcal{L})^{-1}\left(\frac{1}{\sqrt{|g|}}\partial_i\left(\left[\sqrt{|g|}(\delta_{ik}-g_0^{il}g_{lk})g^{kj}\right]\partial_j\right)-
\frac{1}{\sqrt{|g|}}\partial_i\left(\left[\sqrt{|g_0|}-\sqrt{|g|}\right]g_0^{ij}\partial_j\right)\right)(1+t\mathcal{L}_0)^{-1}\\
&&= t\nabla(1+t\mathcal{L})^{-1} \mathrm{div}\left((I-A)\nabla -
\left|\sqrt{|g_0|/|g|}-1\right|\nabla_0\right)(1+t\mathcal{L}_0)^{-1}.
\end{eqnarray*}

{\bf Step 1}. Noting that $(M,g_0)$ satisfies $(D)$ and $(GUB)$, and that \eqref{quasi-isometry} holds,
$(M,g)$ also satisfies $(D)$ and $(GUB)$. It follows from \cite{cd99} that
$\nabla \mathcal{L}^{-1/2}$ and $\nabla \mathcal{L}_0^{-1/2}$ are bounded on $L^q(M)$ for all $q\in (1,2)$.

Since $(1+t\mathcal{L})^{-1/2}\mathrm{div}$ is the dual operator of $\nabla (1+t\mathcal{L})^{-1/2}$,
and $\nabla \mathcal{L}^{-1/2}$  is bounded on $L^{p'}(M)$, $p'\in (1,2)$,
one has that
$$ \left\|(1+t\mathcal{L})^{-1/2}\mathrm{div}\right\|_{p\to p}=
\left\|\nabla (1+t\mathcal{L})^{-1/2}\right\|_{p'\to p'}\le C/\sqrt t, \ \forall\ t>1.$$

{\bf Step 2}. We claim that it holds
$$\left\|\left((I-A)\nabla -
\left|\sqrt{|g_0|/|g|}-1\right|\nabla_0\right)(1+t\mathcal{L}_0)^{-1}\right\|_{p\to p}\le Ct^{-1/2-\epsilon(p_0-p)/(2p(p_0+p))},\ \forall\ t>1.$$
For any $f\in C^\infty_c(M)$, let us first show that
$$\left\|\left((I-A)\nabla\right)(1+t\mathcal{L}_0)^{-1}f\right\|_{p}\le Ct^{-1/2-\epsilon(p_0-p)/(2p(p_0+p))}\|f\|_{p},$$
the other term can be estimated similarly. We write
\begin{eqnarray*}
&&\left\|\left((I-A)\nabla\right)(1+t\mathcal{L}_0)^{-1}f\right\|_{p}\\
&&\quad\le \int_0^{1/t} \|\left((I-A)\nabla\right) e^{-s(1+t\mathcal{L}_0)}f\|_{p}\,ds+\int_{1/t}^\infty \|\left((I-A)\nabla\right) e^{-s(1+t\mathcal{L}_0)}f\|_{p}\,ds.
\end{eqnarray*}
As
$$\|\nabla \L_0^{-1/2}\|_{p_0\to p_0}\le C,$$
one has
$$\|\nabla \L_0^{-1/2}\|_{p\to p}\le C$$
for any $p\in (2,p_0)$, and hence for any $s>0$ that
$$\|\nabla e^{-s\L_0}\|_{p\to p}\le Cs^{-1/2}. \leqno(G_p)$$
Therefore by using $(G_p)$ and that $A$ is bounded, one has the estimate
\begin{eqnarray}\label{3.11}
\int_0^{1/t} \|\left((I-A)\nabla\right) e^{-s(1+t\mathcal{L}_0)}f\|_{p}\,ds&&\le C\int_0^{1/t} \|\nabla e^{-s(1+t\mathcal{L}_0)}f\|_{p}\,ds\nonumber\\
&&\le C\int_0^{1/t} \frac{e^{-s} }{\sqrt{st}}\|f\|_{p}\,ds\nonumber\\
&&\le Ct^{-1}\|f\|_{p}.
\end{eqnarray}
The estimate of the remaining integrand over $(1/t,\infty)$ is more involved.
By using the boundedness of $\nabla \L_0^{-1/2}$ on $L^{p_0}(M)$ and \cite[Proposition 1.10]{acdh},
we see that, for any $p\in (2,p_0)$ there exist $C,\gamma_p>0$ such that for all $t>0$ and $y\in M$
$$\int_M|(\nabla_0)_x p^0_t(x,y)|^{p}\exp\left\{\gz_p d^2(x,y)/t\right\}\,d\mu_0(x)\le \frac{C}{t^{{p}/2}V_0(y,\sqrt t)^{{p}-1}},\leqno(GLY_{p})$$
where $p^0_t(x,y)$ denotes the heat kernel of  $e^{-t\mathcal{L}_0}$.
By using \eqref{quasi-isometry} that $g\sim g_0$, we see that $(GLY_{p})$ is equivalent to
$$\int_M|\nabla_x p^0_t(x,y)|^{p}\exp\left\{\gz_p d^2(x,y)/t\right\}\,d\mu(x)\le \frac{C}{t^{{p}/2}V_0(y,\sqrt t)^{{p}-1}}.$$
In what follows we shall not distinguish these two estimate.

Let $\gz\in (0,\gz_0)$ to be fixed later.
By using the H\"older inequality, one sees that
\begin{eqnarray}\label{3.12}
|\nabla e^{-st\mathcal{L}_0}f(x)|&&\le C\int_M |\nabla_x p^0_{st}(x,y)||f(y)|\,d\mu(y)\nonumber\\
&&\le C\left(\int_M |\nabla_x p^0_{st}(x,y)|^p\exp\left\{\frac{\gz d^2(x,y)}{st}\right\}V_0(y,\sqrt t)^{{p}-1}|f(y)|^p\,d\mu(y)\right)^{1/p}\nonumber\\
&&\quad\times
\left(\int_M V_0(y,\sqrt {st})^{-1}\exp\left\{-\frac{c(p)\gz d^2(x,y)}{st}\right\}\,d\mu(y)\right)^{1/p'}\nonumber\\
&&\le C\left(\int_M |\nabla p^0_{st}(x,y)|^p\exp\left\{\frac{\gz d^2(x,y)}{st}\right\}V_0(y,\sqrt t)^{{p}-1}|f(y)|^p)\,d\mu(y)\right)^{1/p}.
\end{eqnarray}
Above in the last inequality we used the doubling condition to conclude that for any $x,y\in M$ and any $r>0$ that
$$V_0(y,r)^{-1}\le CV_0(x,r+d(x,y))^{-1}\left(\frac{r+d(x,y)}{r}\right)^\Upsilon\le CV_0(x,r)^{-1}\left(\frac{r+d(x,y)}{r}\right)^\Upsilon$$
for some $\Upsilon>0$, and therefore
\begin{eqnarray*}
&&\int_M V_0(y,\sqrt {st})^{-1}\exp\left\{-\frac{c(p)\gz d^2(x,y)}{st}\right\}\,d\mu(y)\\
&&\quad\le
\int_M V_0(x,\sqrt {st})^{-1}\left(\frac{\sqrt{st}+d(x,y)}{\sqrt{st}}\right)^\Upsilon\exp\left\{-\frac{c(p)\gz d^2(x,y)}{st}\right\}\,d\mu(y)\\
&&\quad\le \int_M V_0(x,\sqrt {st})^{-1}\exp\left\{-\frac{c(p,\gz) d^2(x,y)}{st}\right\}\,d\mu(y)\\
&&\quad\le \sum_{k=1}^\infty V_0(x,\sqrt {st})^{-1} V_0(x,2^k\sqrt {st})\exp\left\{-c(p,\gz)2^{2k}\right\} \\
&&\quad\le C.
\end{eqnarray*}
Inequality \eqref{3.12} gives that
\begin{eqnarray*}
&&\left\|(I-A)\nabla e^{-st\mathcal{L}_0}f\right\|_{p}^p\\
&&\le
C\int_M\int_M |(I-A)|^p|\nabla p^0_{st}(x,y)|^p\exp\left\{\frac{\gz d^2(x,y)}{st}\right\}V_0(y,\sqrt t)^{{p}-1}|f(y)|^p\,d\mu(y) \,d\mu(x).
\end{eqnarray*}
Note that $p<p_0$. Letting $\delta=(p_0-p)/2$, $q=(p+p_0)/2$ and $\gz\in (0,\gz_q)$ such that $2(p+\delta)\gz/p=\gz_q$,  we conclude that
\begin{eqnarray*}
&&\int_M |I-A(x)|^p |\nabla_x p^0_{st}(x,y)|^p\exp\left\{\frac{\gz d^2(x,y)}{st}\right\} \,d\mu(x)\\
&&\le \left(\int_M |I-A(x)|^{p(p+\delta)/\delta}\exp\left\{\frac{-(p+\delta)\gz d^2(x,y)}{\delta st}\right\}\,d\mu(x)\right)^{\delta/(p+\delta)}\\
 &&\quad\times \left(\int_{M}|\nabla_x p^0_{st}(x,y)|^{p+\delta}\exp\left\{\frac{2(p+\delta)\gz d^2(x,y)}{pst}\right\} \,d\mu(x)\right)^{p/(p+\delta)}\\
 &&\le \frac{C}{(st)^{{p}/2}V_0(y,\sqrt {st})^{{p-p/q}}}\left(\int_M |I-A(x)|^{pq/\delta}e^{-q\gz \frac{d(x,y)^2}{\delta st}}\,d\mu(x)\right)^{\delta/q}\\
 &&\le  \frac{C}{(st)^{{p}/2}V_0(y,\sqrt {st})^{{p-1}}}\left(\frac{1}{V_0(y,\sqrt{st})}\int_M |I-A(x)|^{pq/\delta}e^{-q\gz \frac{d(x,y)^2}{\delta st}}\,d\mu(x)\right)^{\delta/q},
\end{eqnarray*}
where by \eqref{pol-decay} one has
\begin{eqnarray*}
&&\left(\frac{1}{V_0(y,\sqrt{st})}\int_M |I-A(x)|^{pq/\delta}e^{-q\gz \frac{d(x,y)^2}{\delta st}}\,d\mu(x)\right)^{\delta/q}\\
&&\le \left(\frac{C}{V_0(y,\sqrt{st})}\int_M |I-A(x)|e^{-q\gz \frac{d(x,y)^2}{\delta st}}\,d\mu(x)\right)^{\delta/q}\\
&&\le  \left(\sum_{k=1}^\infty\frac{e^{-c2^{2k}}}{V_0(y,\sqrt{st})}\int_{B(y,2^k\sqrt{st})} |I-A(x)|\,d\mu(x)\right)^{\delta/q}\\
&&\le C(st)^{-\epsilon\delta/2q}.
\end{eqnarray*}
We can therefore conclude that
\begin{eqnarray*}
&&\int_M |I-A(x)|^p |\nabla_x p^0_{st}(x,y)|^p\exp\left\{\frac{\gz d^2(x,y)}{st}\right\} \,d\mu(x)\le
\frac{C}{(st)^{{p}/2+\epsilon\delta/2q}V_0(y,\sqrt {st})^{{p-1}}},
\end{eqnarray*}
and
\begin{eqnarray*}
&&\|(I-A)\nabla e^{-st\mathcal{L}_0}f\|_{p}^p\\
&&\le
C\int_M\int_M |I-A(x)|^p|\nabla_x p^0_{st}(x,y)|^p\exp\left\{\frac{\gz d^2(x,y)}{st}\right\}V_0(y,\sqrt {st})^{{p}-1}|f(y)|^p\,d\mu(y) \,d\mu(x)\\
&&\le C\frac{C}{(st)^{{p}/2+\epsilon\delta/2q}}\|f\|^p_{p}.
\end{eqnarray*}
We finally get the estimate of the second term by
\begin{eqnarray*}
&&\int_{1/t}^\infty \|(I-A)\nabla e^{-s(1+t\mathcal{L}_0)}f\|_{p}\,ds\le \int_{1/t}^\infty \frac{Ce^{-s}}{(st)^{{1}/2+\epsilon\delta/(2pq)}}\|f\|_{p}\,ds\le Ct^{-1/2-\epsilon\delta/(2pq)}\|f\|_{p}.\\
\end{eqnarray*}
This together with \eqref{3.11} implies that
\begin{eqnarray*}
&&\left\|(I-A)\nabla (1+t\mathcal{L}_0)^{-1}f\right\|_{p}\le Ct^{-1/2-\epsilon\delta/(2pq)}\|f\|_{p}.
\end{eqnarray*}
By the same proof, one sees that
$$\left\|\left(\left|\sqrt{|g_0|/|g|}-1\right|\nabla_0\right)(1+t\mathcal{L}_0)^{-1}\right\|_{p\to p}\le Ct^{-1/2-\epsilon\delta/(2pq)}.$$
The above two estimates complete  the proof of {\bf Step 2}.

Finally, by combining the estimates from {\bf Step 1}  and {\bf Step 2}, we
see that
\begin{eqnarray*}
\|I_t^1\|_{p\to p}\le Ct^{-\epsilon(p_0-p)/(2p(p_0+p))} \|\nabla(1+t\mathcal{L})^{-1/2}\|_{p\to p},
\end{eqnarray*}
which completes the proof.
\end{proof}

In Proposition \ref{metric-lem-2}, there is a  loss of integrability,  which is somehow nature from the point of view of comparing arguments; see also \cite{cp98}. If we strengthen the assumption from $(GUB)$ to
two side bounds of the heat kernel, then by using the open-ended property of
the Riesz transform (cf. \cite{cjks16}) we have the end-point estimate.

We say that the heat kernel satisfies Li-Yau  estimate  if there exist $C,c>0$ such that
for all $t>0$ and all $\,x,y\in M$.
$$ \frac{C^{-1}}{V(x,{\sqrt t})}\exp\lf\{-\frac{d^2(x,y)}{ct}\r\}\le p_t(x,y)\le
  \frac{C}{V(x,{\sqrt t})}\exp\lf\{-c\frac{d^2(x,y)}{t}\r\}.\leqno(LY)$$
By \cite{sal,sal2,gri92}, the Li-Yau estimate is equivalent to $(M,g)$ satisfies $(D)$ and a scale invariant
Poincar\'e inequality $(PI)$, i.e.,
$$
\fint_{B(x,r)}|f-f_B|\,d\mu\le
Cr\lf(\fint_{B(x,r)}|\nabla f|^2\,d\mu\r)^{1/2}.\leqno(PI)$$
As $(D)$ and $(PI)$ are invariant under quasi-isometries, the Li-Yau estimate is invariant under
quasi-isometries.

\begin{prop}\label{metric-lem-3}
Assume that $(M,g_0)$ satisfies $(D)$ and $(PI)$, and that \eqref{quasi-isometry} holds.
Suppose that  $\nabla \mathcal{L}_{0}^{-1/2}$ is bounded on
$L^{p_0}(M)$ for some $p_0\in (2,\infty)$, and there exists $\epsilon>0$ such that
$$\fint_{B_0(x,r)}|\delta_{ik}-g_0^{ij}g_{jk}|\,d\mu_0\le Cr^{-\epsilon},\ \forall\ 1\le i,k\le n,  \ \forall x\in M\,\&\,\forall \,r>1.$$
Then there exist $C>0$ and $\alpha>0$ such that for each $t>1$
\begin{eqnarray*}
\|I_t^1\|_{p_0\to p_0}\le Ct^{-\alpha} \|\nabla(1+t\mathcal{L})^{-1/2}\|_{p_0\to p_0}.
\end{eqnarray*}
\end{prop}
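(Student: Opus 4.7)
The plan is to reduce Proposition~\ref{metric-lem-3} to Proposition~\ref{metric-lem-2} by exploiting the open-ended (self-improvement) property of the $L^p$-boundedness of the Riesz transform. Under $(D)$ and $(PI)$, the Li-Yau estimate $(LY)$ holds, which in particular implies $(GUB)$, so the structural hypotheses of Proposition~\ref{metric-lem-2} are satisfied. Moreover, under $(D)+(PI)$ the open-endedness result of \cite{acdh} (see also \cite{cjks16}) applies: if $\nabla \mathcal{L}_0^{-1/2}$ is bounded on $L^{p_0}(M)$ with $p_0\in(2,\infty)$, then there exists $\eta>0$ such that $\nabla \mathcal{L}_0^{-1/2}$ is bounded on $L^{p_0+\eta}(M)$ as well. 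As observed in the proof of Proposition~\ref{metric-lem-2}, this in turn upgrades the Gaffney-type pointwise gradient heat-kernel estimate $(GLY_q)$ from the range $q<p_0$ to the larger range $q\le p_0+\eta/2$, at least if $\eta$ is slightly reduced.

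With this improvement in hand, the index $p_0$ lies strictly inside the open interval $(2,p_0+\eta)$, so Proposition~\ref{metric-lem-2} applies verbatim with $p=p_0$ playing the role of the interior index and $p_0+\eta$ playing the role of the upper endpoint. Specifically, the same two-step argument (estimating $(1+t\mathcal{L})^{-1/2}\mathrm{div}$ by duality, and estimating $((I-A)\nabla)(1+t\mathcal{L}_0)^{-1}$ via H\"older with the improved $(GLY_{p_0+\eta/2})$) goes through unchanged, yielding
\begin{equation*}
\|I_t^1\|_{p_0\to p_0}\le C\,t^{-\alpha}\,\|\nabla(1+t\mathcal{L})^{-1/2}\|_{p_0\to p_0},\qquad t>1,
\end{equation*}
with
\begin{equation*}
\alpha=\frac{\epsilon\,\eta/2}{2 p_0\,(2p_0+\eta/2)}>0,
\end{equation*}
which is precisely the desired endpoint estimate.

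The main obstacle is verifying that the open-endedness statement from \cite{acdh,cjks16} is indeed available under exactly the hypotheses $(D)+(PI)$ assumed here (rather than under stronger global assumptions such as a global Sobolev inequality), and that the quantitative gain $\eta$ translates into a uniform improvement of $(GLY_q)$ on the whole range needed; once these points are confirmed, no further analytic difficulty arises, as the remaining estimates are essentially those of Proposition~\ref{metric-lem-2}. A minor subtlety is that the exponent $\alpha$ one obtains depends on the gain $\eta$, which is not explicit, so only the existence of some positive $\alpha>0$ is asserted in the statement.
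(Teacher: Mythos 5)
Your proposal is correct and follows essentially the same route as the paper: both invoke the open-endedness of the Riesz transform boundedness under $(D)+(PI)$ (the paper cites \cite[Theorem 1.9]{cjks16}) to obtain boundedness on $L^{p_0+\delta}$ for some $\delta>0$, and then import the gradient heat-kernel estimate $(GLY_{p_0+\delta})$ from \cite[Theorem 1.6]{cjks16} so that the two-step argument of Proposition~\ref{metric-lem-2} applies with $p=p_0$ as the interior index. The ``obstacles'' you flag at the end are exactly what the cited theorems of \cite{cjks16} address, so there is no gap.
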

\begin{proof}
Note that, under $(D)$ and $(PI)$, the boundedness of the Riesz transform has an open-ended character, cf. \cite[Theorem 1.9]{cjks16}.
Therefore, there exists $\delta>0$ such that $\nabla \mathcal{L}_{0}^{-1/2}$ is bounded on
$L^{p_0+\delta}(M)$, which together with \cite[Theorem 1.6]{cjks16} implies that
$$\int_M|\nabla_x p^0_t(x,y)|^{p+\delta}\exp\left\{\gz d^2(x,y)/t\right\}\,d\mu_0(x)\le \frac{C}{t^{{(p_0+\delta)}/2}V_0(y,\sqrt t)^{{p_0+\delta}-1}}.\leqno(GLY_{p_0+\delta})$$
Using $(GLY_{p_0+\delta})$ instead of $(GLY_{p})$ in the proof of Proposition \ref{metric-lem-2}, we see that
there exists $\alpha>0$ such that for each $t>1$
\begin{eqnarray*}
\|I_t^1\|_{p_0\to p_0}\le Ct^{-\alpha} \|\nabla(1+t\mathcal{L})^{-1/2}\|_{p_0\to p_0},
\end{eqnarray*}
as desired.
\end{proof}

\begin{thm}\label{metric-perturbation-ly}
Assume that $(M,g_0)$ satisfies $(D)$ and $(PI)$, and that \eqref{quasi-isometry} holds.
Suppose that  $\nabla \mathcal{L}_{0}^{-1/2}$ is bounded on
$L^{p}(M)$ for some $p\in (2,\infty)$, and there exists $\epsilon>0$ such that
$$\fint_{B_0(x,r)}|\delta_{ik}-g_0^{ij}g_{jk}|\,d\mu_0\le Cr^{-\epsilon},\ \forall\ 1\le i,k\le n, \ \forall x\in M\,\&\,\forall \,r>1.$$
Then if $\nabla (1+\mathcal{L})^{-1/2}$  bounded on
$L^p(M)$, $\nabla \mathcal{L}^{-1/2}$ is bounded on $L^p(M)$.
\end{thm}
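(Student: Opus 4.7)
The plan is to mimic the proof of Theorem \ref{main-manifold}, the only novelty being that we now need to absorb the endpoint exponent $p$ rather than work strictly below some $p_0$. As before, it suffices to establish
\[
\|\nabla \mathcal{L}_0^{-1/2}-\nabla \mathcal{L}^{-1/2}\|_{p\to p}\le C,
\]
which, combined with the assumed $L^p$-boundedness of $\nabla \mathcal{L}_0^{-1/2}$, yields the conclusion. Splitting via the subordination identity into integrals over $(0,1)$ and $(1,\infty)$, the short-time piece is handled exactly as in the proof of Theorem \ref{main-manifold}: Lemma \ref{lemma-s1} together with the $L^p$-boundedness of $\nabla\mathcal{L}_0^{-1/2}$ and $\nabla(1+\mathcal{L})^{-1/2}$ makes this term uniformly bounded.

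For the long-time piece the decomposition
\[
\nabla[(1+t\mathcal{L}_0)^{-1}-(1+t\mathcal{L})^{-1}]=I_t^1+II_t^2
\]
is used. Proposition \ref{metric-lem-1} already furnishes the bound on $\|II_t^2\|_{p\to p}$ at the endpoint, since its exponent $\epsilon(p-1)/(2p)$ is strictly positive for every $p>1$. Proposition \ref{metric-lem-2} as stated is unusable at the endpoint because the gain degenerates as $p\to p_0$; this is where the stronger hypothesis enters. Under $(D)$ and $(PI)$, which are both invariant under quasi-isometries, $(LY)$ holds on $(M,g)$ and $(M,g_0)$. The open-ended property of the Riesz transform from \cite[Theorem 1.9]{cjks16} upgrades the $L^p$-boundedness of $\nabla\mathcal{L}_0^{-1/2}$ to $L^{p+\delta}$-boundedness for some $\delta>0$, which in turn yields the gradient-heat-kernel estimate $(GLY_{p+\delta})$ via \cite[Theorem 1.6]{cjks16}. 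Running the comparison argument of Proposition \ref{metric-lem-2} with $p_0$ replaced by $p+\delta$ and using $(GLY_{p+\delta})$ in place of $(GLY_p)$ produces Proposition \ref{metric-lem-3}, namely
\[
\|I_t^1\|_{p\to p}\le Ct^{-\alpha}\|\nabla(1+t\mathcal{L})^{-1/2}\|_{p\to p},\quad t>1,
\]
for some $\alpha>0$ without loss at the endpoint.

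With these two ingredients the bootstrap from the proof of Theorem \ref{main-manifold} transfers verbatim. One feeds the crude bound $\|\nabla(1+t\mathcal{L})^{-1/2}\|_{p\to p}\le C$, obtained from the hypothesis $\nabla(1+\mathcal{L})^{-1/2}\in L^p\to L^p$ together with Lemma \ref{lemma-s2}, into the combined estimate $\|\nabla[(1+t\mathcal{L}_0)^{-1}-(1+t\mathcal{L})^{-1}]\|_{p\to p}\le Ct^{-\alpha}\|\nabla(1+t\mathcal{L})^{-1/2}\|_{p\to p}$, uses it to upgrade $\|\nabla(1+t\mathcal{L})^{-1}\|_{p\to p}$, then $\|\nabla(1+t\mathcal{L})^{-1/2}\|_{p\to p}$, and iterates finitely many times to reach decay of order $t^{-\alpha-1/2}$. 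This decay is integrable against $dt/\sqrt t$ over $(1,\infty)$, giving a uniform bound on the long-time piece and completing the proof.

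The only genuinely new work is Proposition \ref{metric-lem-3}, which is where the $(PI)$ hypothesis is used. Everything downstream is a direct recycling of the machinery already developed for Theorem \ref{main-manifold}, so I expect no further obstacles.
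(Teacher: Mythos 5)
Your proposal is correct and follows essentially the same route as the paper: replace Proposition \ref{metric-lem-2} with Proposition \ref{metric-lem-3}, which uses the open-endedness of the Riesz transform under $(D)$ and $(PI)$ to obtain $(GLY_{p+\delta})$ and hence an endpoint gain, and then rerun the bootstrap from the proof of Theorem \ref{main-manifold}. The paper states this in one line; you have simply unpacked it.
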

\begin{proof}
The conclusion follows from the same proof of Theorem \ref{main-manifold}, using Proposition \ref{metric-lem-3} instead of Proposition \ref{metric-lem-2}.
\end{proof}

We can now finish the proofs for corollaries of Theorem \ref{main-manifold}.
\begin{proof}[Proof of Corollary \ref{cor-lowerRicci}]
 Noting that $(M,g)$ has Ricci curvature bounded from below, the local Riesz transform $\nabla (1+\L)^{-1/2}$
 is bounded on $L^p(M)$ for all $p\in (1,\infty)$; see \cite{acdh}. The conclusion then follows from Theorem \ref{main-manifold}.
\end{proof}

%\begin{cor}
%Assume that $g,\,g_0$ are two metrics on $M$, that satisify  \eqref{quasi-isometry} and there exists $\epsilon>0$ such that
%$$\fint_{B_r(x)}|\delta_{ik}-a_{ik}|\,d\mu\le Cr^{-\epsilon},\ \forall x\in M\,\&\,\forall \,r>1.$$
%Suppose that $(M,g)$ has Ricci curvature bounded from below and satisfies $(D)$ and $(PI)$.
%Then if  $\nabla \mathcal{L}_{0}^{-1/2}$ is bounded on $L^{p}(M)$ for some $p\in (2,\infty)$,
%$\nabla \mathcal{L}^{-1/2}$ is bounded on $L^p(M)$.
%\end{cor}

\begin{proof}[Proof of Corollary \ref{cor-smooth}]
If $g$ coincides with  $g_0$ outside a compact subset $M_0$, then \eqref{quasi-isometry} holds.
By using $(D)$ together with the connectivity of $M$, one sees there exists $0<\upsilon\le \Upsilon<\infty$  such that for any $y\in M$ and $0<r<R<\infty$
\begin{equation}\label{reverse-doubling}
\frac{1}{C}\left(\frac{R}{r}\right)^{\upsilon}\le  \frac{V_0(y,R)}{V_0(y,r)}\le C \left(\frac{R}{r}\right)^{\Upsilon};
\end{equation}
see \cite{hkst} for instance.

Note that it holds
$$\fint_{B_0(x,r)}|\delta_{ik}-g_0^{ij}g_{jk}|\,d\mu_0\le \frac{C}{V_0(x,r)}\int_{M_0\cap B_0(x,r)}|\delta_{ik}-g_0^{ij}g_{jk}|\,d\mu_0,\ \forall x\in M\,\&\,\forall \,r>1.$$
Fix $x_0\in M_0$. If $B_0(x,r)\cap M_0=\emptyset$, then
\begin{equation}\label{case-empty}
\fint_{B_0(x,r)}|\delta_{ik}-g_0^{ij}g_{jk}|\,d\mu_0=0.
\end{equation}
Otherwise, by using \eqref{reverse-doubling}, one has
\begin{eqnarray*}
V_0(x_0,\mathrm{diam}(M_0))&&\le C\left(\frac{\mathrm{diam}(M_0)}{r+\mathrm{diam}(M_0)}\right)^\upsilon V_0(x_0,r+\mathrm{diam}(M_0))\\
&&\le C\left(\frac{\mathrm{diam}(M_0)}{r+\mathrm{diam}(M_0)}\right)^\upsilon V_0(x,2r+2\mathrm{diam}(M_0))\\
&&\le C\left(\frac{\mathrm{diam}(M_0)}{r+\mathrm{diam}(M_0)}\right)^\upsilon
\left(\frac{r+\mathrm{diam}(M_0)}{r}\right)^\Upsilon V_0(x,r),\\
\end{eqnarray*}
and therefore,
\begin{eqnarray*}
&&\fint_{B_0(x,r)}|\delta_{ik}-g_0^{ij}g_{jk}|\,d\mu_0\\
&&\le C\left(\frac{\mathrm{diam}(M_0)}{r+\mathrm{diam}(M_0)}\right)^\upsilon
\left(\frac{r+\mathrm{diam}(M_0)}{r}\right)^\Upsilon \frac{1}{V_0(x_0,\mathrm{diam}(M_0))}\int_{M_0} |\delta_{ik}-g_0^{ij}g_{jk}|\,d\mu_0.
\end{eqnarray*}
This together with \eqref{case-empty} implies that for all $x\in M$ and all $r>1$ it holds
$$\fint_{B_0(x,r)}|\delta_{ik}-g_0^{ij}g_{jk}|\,d\mu_0\le \frac{C}{r^\upsilon},$$
where $C$ depends on $M_0$.

By applying Theorem \ref{main-manifold}, together with that $(M,g)$ and $(M,g_0)$ having lower Ricci curvature bounds, we see $\nabla_0\L_0^{-1/2}$ is bounded on $L^p(M)$ for all $p\in (2,p_0)$,
if and only if, $\nabla\L^{-1/2}$ is bounded on $L^p(M)$ for all $p\in (2,p_0)$.
\end{proof}

\begin{proof}[Proof of Corollary \ref{cor-nonnegative}]
Note that since $(M,g_0)$ has non-negative Ricci curvature, $\nabla_0\L_0^{-1/2}$
is bounded on $L^p(M)$ for all $p\in (1,\infty)$; see \cite{acdh}. This together with Corollary \ref{cor-smooth} implies that
$\nabla \L^{-1/2}$ is bounded on $L^p(M)$ for all $p\in (2,\infty)$.

Moreover, since $(D)$ and $(GUB)$ hold on $(M,g_0)$ as a consequence of non-negative Ricci curvature (cf. \cite{ly86}),
$(D)$ and $(GUB)$ hold on $(M,g)$. By \cite{cd99} we see that $\nabla \L^{-1/2}$ is bounded on $L^p(M)$ for all $p\in (1,2)$.
\end{proof}

\section{Degenerate elliptic equations}\label{sec-degenerate}
\hskip\parindent In this section, we deal with degenerate elliptic equations on Euclidean spaces.
Let $A_2(\rn)$ denote the collection of $A_2$-Muckenhoupt weights, and $QC(\rn)$ denote the collection of all quasi-conformal weights, i.e.,
$w\in QC(\rn)$ if there exists a quasi-conformal mapping $f:\,\rn\to\rn$ such that $w=|J_f|^{1-2/n}$, where $J_f$ denotes the determinant
of the gradient matrix $Df$; see \cite{FKS82}.

For $w,w_0\in A_2(\rn)\cup QC(\rn)$, denote by
$$V(x,r)=\int_{B(x,r)}w\,dy, \ \ \ V_0(x,r)=\int_{B(x,r)}w_0\,dy,\ \, \forall \,x\in\rn \&\, r>0.$$
For $w,w_0\in A_2(\rn)\cup QC(\rn)$, the volumes $V,V_0$ satisfy the doubling condition,
and there are scale-invariant Poincar\'e inequality $(PI)$ on the spaces $(\rn,w\,dx)$ and $(\rn,w_0\,dx)$;  see \cite{FKS82}.

We will assume that $C^{-1}w\le w_0\le Cw$ in what follows. As a consequence of the assumption,
it holds that $C^{-1}V(x,r)\le V_0(x,r)\le CV(x,r)$ for any $x\in\rn$ and $r>0$, and $L^p(w)=L^p(w_0)$ for any $p>0$.
In what follows, we will not distinguish $L^p(w)$ and $L^p(w_0)$, and denote by $\|\cdot\|_{p\to p}$
the operator norm $\|\cdot\|_{L^p(w)\to L^p(w)}$.

In this section, we use $\nabla,\,\mathrm{div}$ to denote the gradient operator and divergence operator on $\rn$. We use the notation $B(x,r)$ for open ball under usual Euclidean metric of $\rn$.
The proof of results in this section is similar to that of Theorem \ref{main-manifold}, thus we only sketch their proofs.

\begin{thm}\label{main-denegerate}
Let $A,A_0$ be $n\times n$ matrixes that satisfy  uniformly elliptic  conditions, and
$w_0,w\in A_2(\rn)\cup QC(\rn)$ with
$$C^{-1}w(x)\le w_0(x)\le Cw(x),\,a.e.\, x\in \rn.$$
Suppose there exists $\epsilon>0$ such that
$$\frac{1}{V_0(y,r)}\int_{B(y,r)} \left(|A-A_0|+\frac{|w_0-w|}{w_0}\right) w_0\,dx\le \frac{C}{r^\epsilon},\quad\, \forall \,y\in\rn \, \& \,  r>1.$$
Let $\L=-\frac{1}{w}\mathrm{div}(wA\nabla)$ and $\L_0=-\frac{1}{w_0}\mathrm{div}(w_0A_0\nabla).$
Then if $\nabla\mathcal L_0^{-1/2}$ and $\nabla(1+\mathcal L)^{-1/2}$  are bounded on $L^p(w)$ for some $p\in (2,\infty)$,
$\nabla\mathcal L^{-1/2}$ is bounded on $L^p(w)$.
\end{thm}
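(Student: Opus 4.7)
\textbf{Proof plan for Theorem \ref{main-denegerate}.} The plan is to mirror the argument used for Theorem \ref{main-manifold}, replacing the Riemannian volume $\mu$ by the measure $w\,dx$ and the Riemannian gradient/divergence by the Euclidean ones. The first step is to write
\begin{eqnarray*}
\nabla\L_0^{-1/2}-\nabla\L^{-1/2}
&=&\frac{1}{\pi}\int_0^1\nabla\bigl[(1+t\L_0)^{-1}-(1+t\L)^{-1}\bigr]\frac{dt}{\sqrt t}\\
&&+\frac{1}{\pi}\int_1^\infty\nabla\bigl[(1+t\L_0)^{-1}-(1+t\L)^{-1}\bigr]\frac{dt}{\sqrt t}.
\end{eqnarray*}
The integral on $(0,1)$ will be controlled by $\|\nabla\L_0^{-1/2}\|_{p\to p}+\|\nabla(1+\L)^{-1/2}\|_{p\to p}$ via Lemma \ref{lemma-s1}. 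For the integral on $(1,\infty)$, one needs to establish, for some $\alpha>0$,
\[
\bigl\|\nabla\bigl[(1+t\L_0)^{-1}-(1+t\L)^{-1}\bigr]\bigr\|_{p\to p}\le Ct^{-\alpha-1/2},\qquad t>1,
\]
and the bootstrap argument from the proof of Theorem \ref{main-manifold} will then conclude the proof.

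The decomposition analogous to $I_t^1$ and $II_t^2$ comes from the identity
\[
\L_0-\L=-\frac{1}{w}\mathrm{div}\bigl(w(A_0-A)\nabla\bigr)-\frac{1}{w}\mathrm{div}\bigl((w_0-w)A_0\nabla\bigr)+\frac{w_0-w}{ww_0}\mathrm{div}(w_0A_0\nabla),
\]
together with the resolvent identity $(1+t\L)^{-1}-(1+t\L_0)^{-1}=t(1+t\L)^{-1}(\L_0-\L)(1+t\L_0)^{-1}$. The first two terms give an $I_t^1$-type operator of the form $t\nabla(1+t\L)^{-1}\,\frac{1}{w}\mathrm{div}\bigl((A_0-A)\,w\nabla+(w_0-w)A_0\nabla\bigr)(1+t\L_0)^{-1}$, while the third yields an $II_t^2$-type operator $t\nabla(1+t\L)^{-1}\frac{w_0-w}{w_0}\L_0(1+t\L_0)^{-1}$. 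Since $w_0,w\in A_2(\rn)\cup QC(\rn)$ with $w\sim w_0$, the space $(\rn,w\,dx)$ satisfies $(D)$ and $(PI)$ (hence $(GUB)$), and the same holds for $w_0$; moreover the assumed comparability of $A$ and $A_0$ keeps $\L$ uniformly elliptic with respect to $w$.

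For the analog of Proposition \ref{metric-lem-1}, I would use the Gaussian upper bound for $p^0_{st}(x,y)$ on $(\rn,w_0\,dx)$ together with H\"older's inequality and the decay condition on $(w_0-w)/w_0$, precisely as in Step 1 of that proposition, to get $\|(1+t\L)^{-1/2}\cdot\tfrac{w_0-w}{w_0}\|_{p\to p}\le Ct^{-\epsilon(p-1)/(2p)}$. Combined with the boundedness of $\L_0(1+t\L_0)^{-1}$, this handles the $II_t^2$-type term. For the $I_t^1$-type term, I would follow Proposition \ref{metric-lem-2}: duality gives $\|(1+t\L)^{-1/2}\,\tfrac{1}{w}\mathrm{div}(w\,\cdot)\|_{p\to p}\le C/\sqrt t$ via the Riesz transform bound in $L^{p'}$ (from Coulhon--Duong \cite{cd99}), while the pointwise gradient heat kernel estimate $(GLY_p)$ for $e^{-t\L_0}$—available here since $\nabla\L_0^{-1/2}$ is bounded on $L^{p_0}(w)$ for some $p_0>p$ by the open-ended character of the Riesz transform under $(D)$ and $(PI)$ (\cite[Theorem~1.9]{cjks16})—combined with the decay hypothesis on $|A-A_0|$ and $(w_0-w)/w_0$ yields
\[
\Bigl\|\bigl((A_0-A)\nabla+\tfrac{w_0-w}{w}\,A_0\nabla\bigr)(1+t\L_0)^{-1}\Bigr\|_{p\to p}\le Ct^{-1/2-\alpha}
\]
for some $\alpha>0$ depending on $\epsilon,p,p_0$.

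The main obstacle will be verifying that the heat kernel of $\L_0$ on $(\rn,w_0\,dx)$ indeed enjoys both the Gaussian upper bound and the pointwise gradient bound $(GLY_{p_0+\delta})$ used in the manifold argument; however these are known for $A_2$ weights through the Fabes--Kenig--Serapioni theory together with \cite[Proposition~1.10]{acdh} and \cite[Theorem~1.6]{cjks16}, both of which are formulated in the general doubling metric measure setting and so apply verbatim once $(D)$ and $(PI)$ are in force. Once these analogs of Propositions \ref{metric-lem-1} and \ref{metric-lem-2} are in hand, the iterative bootstrap of the proof of Theorem \ref{main-manifold} goes through unchanged, giving $\|\nabla\L_0^{-1/2}-\nabla\L^{-1/2}\|_{p\to p}\le C$ and hence the desired $L^p(w)$-boundedness of $\nabla\L^{-1/2}$.
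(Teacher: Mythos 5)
Your plan follows the paper's proof essentially verbatim: the same split of $\nabla\L_0^{-1/2}-\nabla\L^{-1/2}$ into the $(0,1)$ and $(1,\infty)$ pieces with Lemma \ref{lemma-s1} handling the former, the same resolvent decomposition of $\L_0-\L$ (your three-term form is just an algebraic rearrangement of the paper's two-term form, yielding the identical $I_t^1$/$II_t^2$ split since $(w-w_0)A-w_0(A_0-A)=wA-w_0A_0=-\bigl(w(A_0-A)+(w_0-w)A_0\bigr)$), the same duality trick $\|(1+t\L)^{-1/2}\tfrac1w\mathrm{div}(w\,\cdot)\|_{p\to p}\lesssim t^{-1/2}$ via the $L^{p'}$ Riesz bound, the same use of $(D)$, $(PI)$ and the open-ended property from \cite{cjks16} to get $(GLY_{p_0})$ for some $p_0>p$, and the same bootstrap. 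This matches Propositions \ref{degene-lem-1} and \ref{degene-lem-2} and the proof of Theorem \ref{main-denegerate} in the paper.
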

\begin{proof}
Let us begin with the formula that for any $t>0$
\begin{eqnarray*}
\nabla(1+t\L_0)^{-1}-\nabla(1+t\L)^{-1}&&=t\nabla(1+t\L)^{-1}(\L_0-\L)(1+t\L_0)^{-1},
\end{eqnarray*}
and
\begin{eqnarray*}
\nabla \mathcal{L}_0^{-1/2}-\nabla \mathcal{L}^{-1/2}&&=\frac{1}{\pi}\int_0^1 \nabla[(1+t\mathcal{L}_0)^{-1}-(1+t\mathcal{L})^{-1}]\frac{\,dt}{\sqrt t}\\
&&+\frac{1}{\pi}\int_1^\infty\nabla[(1+t\mathcal{L}_0)^{-1}-(1+t\mathcal{L})^{-1}]\frac{\,dt}{\sqrt t}.
\end{eqnarray*}

{\bf Step 1}. By Lemma \ref{lemma-s1} and our assumptions that
$\nabla\mathcal L_0^{-1/2}$ and $\nabla(1+\mathcal L)^{-1/2}$  are bounded on $L^p(w_0)$, we have
$$\left\|\int_0^1 \nabla[(1+t\mathcal{L}_0)^{-1}-(1+t\mathcal{L})^{-1}]\frac{\,dt}{\sqrt t}\right\|_{p\to p}\le C.$$

{\bf Step 2.} For the remaining term, by the following Proposition \ref{degene-lem-1} and Proposition \ref{degene-lem-2},
we see that there exists $\alpha>0$ such that for any $t>1$
\begin{equation}\label{degene-2}
\|\nabla[(1+t\mathcal{L}_0)^{-1}-(1+t\mathcal{L})^{-1}]\|_{p\to p}\le Ct^{-\alpha}\|\nabla(1+t\mathcal{L})^{-1/2}\|_{p\to p}.
\end{equation}
With this, repeating the same proof after \eqref{metric-2} in the proof of Theorem \ref{main-manifold},
we conclude that
$\nabla \mathcal{L}^{-1/2}$ is bounded on $L^p(w)$. %
\end{proof}

Let us prove \eqref{degene-2} in the following two propositions.  Note that
\begin{eqnarray*}
\L_0-\L&&=\frac{1}{w}\mathrm{div}\left[((w-w_0)A-w_0(A_0-A))\nabla\right]-\frac{w-w_0}{w_0w}\mathrm{div}(w_0A_0\nabla),
\end{eqnarray*}
and set
\begin{eqnarray*}
I_t^1=&& t\nabla(1+t\mathcal{L})^{-1}\left(\frac{1}{w}\mathrm{div}\left(\left[(w-{w_0})A-{w_0}(A_0-A)\right]\nabla (1+t\mathcal{L}_0)^{-1}\right)\right),
\end{eqnarray*}
and
\begin{eqnarray*}
II_t^2=t\nabla(1+t\mathcal{L})^{-1}\left(\frac{w-w_0}{w}\L_0(1+t\mathcal{L}_0)^{-1}\right).
\end{eqnarray*}
Then we have the following representation
\begin{eqnarray*}
\int_1^\infty \nabla[(1+t\mathcal{L}_0)^{-1}-(1+t\mathcal{L})^{-1}]\frac{\,dt}{\sqrt t}=\int_1^\infty(I_t^1+II_t^2)\frac{\,dt}{\sqrt t}.
\end{eqnarray*}

\begin{prop}\label{degene-lem-1}
Let $A,A_0$ be $n\times n$ matrixes that satisfy  uniformly elliptic  conditions, and
$w_0,w\in A_2(\rn)\cup QC(\rn)$ with
$$C^{-1}w(x)\le w_0(x)\le Cw(x),\,a.e.\, x\in \rn.$$
Suppose there exists $\epsilon>0$ such that
$$\frac{1}{V_0(y,r)}\int_{B(y,r)} \left(\frac{|w_0-w|}{w_0}\right) w_0\,dx\le \frac{C}{r^\epsilon},\quad\, \forall \,y\in\rn \, \& \,  r>1.$$
Then for each $p\in (2,\infty)$, there exists $C>0$ such that for each $t>1$ it holds
$$\|II_t^2\|_{p\to p}\le Ct^{-\epsilon(p-1)/2p}\|\nabla(1+t\mathcal{L})^{-1/2}\|_{p\to p} .$$
\end{prop}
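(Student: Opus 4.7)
The plan is to follow the strategy of Proposition \ref{metric-lem-1} line by line, adapted to the weighted Euclidean setting, since the hypotheses give doubling and Poincar\'e (hence Li--Yau-type heat kernel bounds) on both $(\rn, w\,dx)$ and $(\rn, w_0\,dx)$, and since the algebraic identity $\frac{|w-w_0|}{w}\cdot w = |w-w_0|$ makes the $(GD)$-type hypothesis directly usable.

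First I would factor
\begin{eqnarray*}
II_t^2 &=& \nabla(1+t\L)^{-1/2}\cdot\Bigl[(1+t\L)^{-1/2}M_{(w-w_0)/w}\Bigr]\cdot\Bigl[t\L_0(1+t\L_0)^{-1}\Bigr],
\end{eqnarray*}
where $M_h$ denotes multiplication by $h$. The first factor produces the $\|\nabla(1+t\L)^{-1/2}\|_{p\to p}$ in the conclusion. The third factor equals $I-(1+t\L_0)^{-1}$, and is uniformly bounded on $L^p(w)=L^p(w_0)$, since the resolvent is controlled by the semigroup via the subordination identity and the Gaussian upper bound for $e^{-s\L_0}$. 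Thus the whole task reduces to proving the middle-factor estimate
\begin{eqnarray*}
\|(1+t\L)^{-1/2}M_{(w-w_0)/w}\|_{p\to p} &\le& Ct^{-\epsilon(p-1)/2p},\qquad t>1.
\end{eqnarray*}

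To establish this I would apply the subordination formula $(1+t\L)^{-1/2}=C\int_0^\infty e^{-s(1+t\L)}\frac{ds}{\sqrt s}$ and split at $s=1/t$. On $(0,1/t)$, $M_{(w-w_0)/w}$ is a bounded multiplier (by $C^{-1}w\le w_0\le Cw$) and $\|e^{-s(1+t\L)}\|_{p\to p}\le e^{-s}$, giving a contribution of order $1/\sqrt t$. On $(1/t,\infty)$, I use the Gaussian upper bound $p^{\L}_{st}(x,y)\le C V(x,\sqrt{st})^{-1}e^{-d(x,y)^2/(cst)}$, and apply H\"older as in Step~1 of Proposition \ref{metric-lem-1}, producing two factors; one is the usual $L^p$-norm of $f$ weighted by a Gaussian density, which integrates harmlessly, and the other is
\begin{eqnarray*}
\left(\frac{C}{V(x,\sqrt{st})}\int_\rn e^{-d(x,y)^2/(2cst)}\left|\tfrac{w-w_0}{w}(y)\right|^{p'}w(y)\,dy\right)^{1/p'}.
\end{eqnarray*}
I then decompose the spatial integral into the annuli $B(x,2^k\sqrt{st})\setminus B(x,2^{k-1}\sqrt{st})$, absorb the $(p'-1)$-th power of the bounded function $|(w-w_0)/w|$ into a constant, and use the identity $|(w-w_0)/w|\cdot w=|w-w_0|$ to convert the integrand to $|w-w_0|$. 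The hypothesis then gives $\fint_{B(x,2^k\sqrt{st})}|w-w_0|\,dx\le C(2^k\sqrt{st})^{-\epsilon}$ (valid because $2^k\sqrt{st}>1$ on the range $s>1/t$ and because $V\sim V_0$), and summing in $k$ with the Gaussian factor $e^{-c2^{2k}}$ produces the required $(st)^{-\epsilon/p'}=(st)^{-\epsilon(p-1)/p}$ decay. Integrating the $\sqrt{st}\,$-th root of this against $e^{-s}ds/\sqrt s$ over $(1/t,\infty)$ then yields $Ct^{-\epsilon(p-1)/(2p)}$.

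The main technical point, and the only real obstacle, is the annular estimate on the weighted Gaussian integral of $|(w-w_0)/w|$; everything else (bounded multiplier, contraction of the semigroup, $L^p$-boundedness of the resolvent) is standard. The care required is only to keep the $w$-weight in front of $f$ separate from the weight buried inside $(w-w_0)/w$ and to verify that $V\sim V_0$ permits transferring the mean-value decay between the two measures; both are immediate from $C^{-1}w\le w_0\le Cw$.
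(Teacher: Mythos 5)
Your proposal is correct and follows the same route as the paper. The paper's proof is extremely terse — it simply notes that $w,w_0 \in A_2(\rn)\cup QC(\rn)$ gives $(D)$, $(PI)$ and hence Li--Yau bounds, and then says ``we follow the proof of Proposition \ref{metric-lem-1}'' to obtain the middle-factor estimate $\|(1+t\L)^{-1/2}\frac{w-w_0}{w}\|_{p\to p}\le Ct^{-\epsilon(p-1)/2p}$, finishing with $\|\L_0(1+t\L_0)^{-1}\|_{p\to p}\le C/t$. You have carried out exactly that adaptation: the factorization of $II_t^2$ into the three operators, the subordination split at $s=1/t$, the H\"older step against the Gaussian kernel, the absorption of $|(w-w_0)/w|^{p'-1}$ into a constant, and the annular mean-value decay from the hypothesis (transferring between $V$ and $V_0$ via $w\sim w_0$) are the intended substance of ``follow the proof of Proposition \ref{metric-lem-1}''. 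The only blemish is a slip in the exponent bookkeeping at the end: the annular sum over $k$ gives $(\sqrt{st})^{-\epsilon}$ inside the $p'$-th power, so after taking the $1/p'$ root you get $(st)^{-\epsilon/(2p')}$, not $(st)^{-\epsilon/p'}$, and the phrase ``integrating the $\sqrt{st}$-th root of this'' should just read ``integrating this against $e^{-s}\,ds/\sqrt s$''; the final bound $Ct^{-\epsilon/(2p')}=Ct^{-\epsilon(p-1)/(2p)}$ is nonetheless the one you state, matching the paper.
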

\begin{proof}
Note that $(D)$ and $(PI)$ hold since $w_0,w\in A_2(\rn)\cup QC(\rn)$, the heat kernels $p^\L_t(x,y)$, $p^{\L_0}_t(x,y)$
of $e^{-t\L}$, $e^{-t\L_0}$ satisfy $(LY)$ estimates by \cite{sal,sal2}, i.e.,
$$p_{t}^\L(x,y),\,p_{t}^{\L_0}(x,y) \sim \frac{1}{V(x,\sqrt{t})}e^{-\frac{d(x,y)^2}{ct}}.$$
Using this, and the assumption that
$$\frac{1}{V_0(y,r)}\int_{B(y,r)} \frac{|w_0-w|}{w_0} w_0\,dx\le \frac{C}{r^\epsilon},\quad\, \forall \,y\in\rn \, \& \,  r>1,$$
we follow the proof of Proposition \ref{metric-lem-1} to see that
$$\left\|(1+t\mathcal{L})^{-1/2}\frac{w-w_0}{w}\right\|_{p\to p}\le Ct^{-\epsilon(p-1)/2p}.$$
This, together with the estimate
$$\|\L_0(1+t\L_0)^{-1}\|_{p\to p}\le C/t$$
for all $t>1$, implies
\begin{eqnarray*}
\|II_t^2\|_{p\to p}\le Ct^{-\epsilon(p-1)/2p}\|\nabla(1+t\mathcal{L})^{-1/2}\|_{p\to p}.
\end{eqnarray*}
\end{proof}

\begin{prop}\label{degene-lem-2}
Let $A,A_0$ be $n\times n$ matrixes that satisfy  uniformly elliptic  conditions, and
$w_0,w\in A_2(\rn)\cup QC(\rn)$ with
$$C^{-1}w(x)\le w_0(x)\le Cw(x),\,a.e.\, x\in \rn.$$
Suppose there exists $\epsilon>0$ such that
$$\frac{1}{V_0(y,r)}\int_{B(y,r)} \left(|A-A_0|+\frac{|w_0-w|}{w_0}\right) w_0\,dx\le \frac{C}{r^\epsilon},\quad\, \forall \,y\in\rn \, \& \,  r>1.$$
Let $\L=-\frac{1}{w}\mathrm{div}(wA\nabla)$ and $\L_0=-\frac{1}{w_0}\mathrm{div}(w_0A_0\nabla).$
Then if $\nabla\mathcal L_0^{-1/2}$ is bounded $L^p(w)$ for some $p\in (2,\infty)$, there exist $C,\,\alpha>0$ such that for any $t>1$
\begin{eqnarray*}
\|I_t^1\|_{p\to p}\le Ct^{-\alpha} \|\nabla(1+t\mathcal{L})^{-1/2}\|_{p\to p}.
\end{eqnarray*}
\end{prop}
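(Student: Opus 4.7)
The plan is to mirror the strategy of Proposition \ref{metric-lem-2}, with the endpoint refinement of Proposition \ref{metric-lem-3}, carried out on the weighted Euclidean space $(\rn, w\,dx)$. The key is to split $I_t^1$ into a good operator times $\sqrt t\,\nabla(1+t\L)^{-1/2}$ and then to supply the requisite time decay of order $t^{-1/2-\alpha}$ on the good factor.

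First I would write, with $\widetilde B = w^{-1}\bigl[(w-w_0)A - w_0(A_0-A)\bigr]$,
$$I_t^1 = \sqrt t\,\nabla(1+t\L)^{-1/2}\;\cdot\;\sqrt t\,(1+t\L)^{-1/2}\frac{1}{w}\mathrm{div}\Bigl(w\,\widetilde B\,\nabla(1+t\L_0)^{-1}\Bigr).$$
The first factor is absorbed into $\|\nabla(1+t\L)^{-1/2}\|_{p\to p}$. For the middle factor (Step 1), note that $w_0,w\in A_2(\rn)\cup QC(\rn)$ gives $(D)$ and $(PI)$, so the heat kernel satisfies $(LY)$, and Coulhon--Duong \cite{cd99} yields $L^{p'}(w)$-boundedness of $\nabla \L^{-1/2}$ for every $p'\in (1,2)$. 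Taking the $L^2(w)$-adjoint of $\nabla(1+t\L)^{-1/2}$ yields $-(1+t\L)^{-1/2}\tfrac{1}{w}\mathrm{div}(w\,\cdot)$, whence duality produces
$$\bigl\|\sqrt t\,(1+t\L)^{-1/2}\tfrac{1}{w}\mathrm{div}(w\,\cdot)\bigr\|_{p\to p}\le C, \qquad t>1.$$

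Step 2 is the heart of the argument: I would show that there exists $\alpha>0$ with
$$\|\widetilde B\,\nabla(1+t\L_0)^{-1}\|_{p\to p}\le Ct^{-1/2-\alpha},\qquad t>1.$$
Since $(D)$ and $(PI)$ hold on $(\rn,w\,dx)$, the boundedness of the Riesz transform is open-ended \cite[Theorem 1.9]{cjks16}, so the hypothesis $\nabla\L_0^{-1/2}\in\mathcal B(L^p(w))$ upgrades to $L^{p+\delta}(w)$-boundedness for some $\delta>0$. Then \cite[Theorem 1.6]{cjks16} yields the Gaussian gradient bound $(GLY_{p+\delta})$ for the heat kernel $p^{\L_0}_t$. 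Writing $(1+t\L_0)^{-1}=\int_0^\infty e^{-s}e^{-st\L_0}\,ds$, I would split the $s$-integral at $s=1/t$: for $s<1/t$, use boundedness of $\widetilde B$ and the subordinated gradient estimate $(G_p)$ to extract $t^{-1}$; for $s\ge 1/t$, apply H\"older's inequality with exponents $(p+\delta)/p$ and $(p+\delta)/\delta$ as in Proposition \ref{metric-lem-2}, so that the heat kernel factor is controlled by $(GLY_{p+\delta})$ and the remaining factor reduces, via a dyadic annular decomposition around $y$, to
$$\Bigl(\frac{C}{V_0(y,\sqrt{st})}\int_{\rn}|\widetilde B(x)|\,e^{-c\,d(x,y)^2/(st)}\,w_0\,dx\Bigr)^{\delta/q}\le C(st)^{-\epsilon\delta/(2q)},$$
with $q=(p+\delta)/2$. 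Since $w\sim w_0$, the matrix $\widetilde B$ inherits the decay $\fint_{B(y,r)}|\widetilde B|\,w_0\,dx\le Cr^{-\epsilon}$ from the hypothesis on $|A-A_0|$ and $|w-w_0|/w_0$, which is exactly what drives the above estimate.

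The main obstacle is this Step 2 calculation, in particular keeping track of the $A_2$-weight throughout while coupling the $r^{-\epsilon}$ decay of $\widetilde B$ to the Gaussian gradient bound of $p^{\L_0}_{st}$; the open-ended improvement from $p$ to $p+\delta$ is what provides the room needed to absorb $|\widetilde B|$ into a H\"older exponent. Once Steps 1 and 2 are combined, the asserted inequality $\|I_t^1\|_{p\to p}\le Ct^{-\alpha}\|\nabla(1+t\L)^{-1/2}\|_{p\to p}$ follows at once.
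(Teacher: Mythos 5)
Your proposal is correct and follows essentially the same route as the paper's proof: Step~1 establishes $\|(1+t\mathcal L)^{-1/2}\frac{1}{w}\mathrm{div}(w\,\cdot)\|_{p\to p}\le C/\sqrt t$ by duality with $\nabla\mathcal L^{-1/2}$ on $L^{p'}(w)$, and Step~2 uses open-endedness from \cite[Theorem 1.9]{cjks16} to upgrade to $L^{p+\delta}(w)$, the gradient Gaussian bound from \cite[Theorem 1.6]{cjks16}, and a H\"older argument against the $r^{-\epsilon}$ decay of $\widetilde B$, exactly as in Propositions~\ref{metric-lem-2} and~\ref{metric-lem-3}. (The only quibble is notational: with H\"older exponents $(p+\delta)/p$ and $(p+\delta)/\delta$, the auxiliary $q$ should be $p+\delta$ rather than $(p+\delta)/2$, but this only shifts the constant $\alpha$.)
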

\begin{proof} Recall that
\begin{eqnarray*}
I_t^1=&& t\nabla(1+t\mathcal{L})^{-1}\left(\frac{1}{w}\mathrm{div}w\left[\left(\left(1-\frac{w_0}{w}\right)A-\frac{w_0}{w}(A_0-A)\right)\nabla (1+t\mathcal{L}_0)^{-1}\right]\right),
\end{eqnarray*}

{\bf Step 1}.
Since $(1+t\mathcal{L})^{-1/2}\frac{1}{w}\mathrm{div}w$ is the dual operator of $\nabla (1+t\mathcal{L})^{-1/2}$,
and $\nabla \mathcal{L}^{-1/2}$  is bounded on $L^{p'}(w)$,
one has that
$$ \left\|(1+t\mathcal{L})^{-1/2}\frac{1}{w}\mathrm{div}w\right\|_{p\to p}=
\left\|\nabla (1+t\mathcal{L})^{-1/2}\right\|_{{p'}\to p'}\le C/\sqrt t.$$

{\bf Step 2}.
As $w_0,w\in A_2(\rn)\cup QC(\rn)$, $(D)$ and $(PI)$ hold. By \cite[Theorem 1.6 \& Theorem 1.9]{cjks16},
we see that there exists $p_0>p$ such that $\nabla\mathcal L_0^{-1/2}$ is bounded on $L^{p_0}(w)$ and
$$\int_\rn|\nabla_x p^{\L_0}_t(x,y)|^{p_0}\exp\left\{\gz d^2(x,y)/t\right\}w_0(x)\,dx\le \frac{C}{t^{{p_0}/2}V_0(y,\sqrt t)^{{p_0}-1}},\leqno(GLY_{p_0})$$
Using this together with the assumption
$$\frac{1}{V_0(y,r)}\int_{B(y,r)} \left(|A-A_0|+\frac{|w_0-w|}{w_0}\right) w_0\,dx\le \frac{C}{r^\epsilon},\quad\, \forall \,y\in\rn \, \& \,  r>1,$$
we follow the proof of Proposition \ref{metric-lem-2} and Proposition \ref{metric-lem-3} to conclude that there exists $\alpha>0$ such that
$$\left\|\left(\left(1-\frac{w_0}{w}\right)A-\frac{w_0}{w}(A_0-A)\right)\nabla (1+t\mathcal{L}_0)^{-1}\right\|_{p\to p}\le Ct^{-1/2-\alpha}.$$
The above two steps give the desired estimates.
\end{proof}

Finally by using Theorem \ref{main-denegerate} and the result of \cite{cp98} we can finish the proof of Corollary \ref{cor-euclidean}.
Note that $L^p$-boundedness of the Riesz transform on $L^p(\rn)$ for $p\in (1,2)$ is always true if
the operator is uniformly elliptic; see \cite{cd99}.

\begin{proof}[Proof of Corollary \ref{cor-euclidean}]
By \cite{cp98}, if the matrix $A$ is uniformly continuous on $\rn$, then
every solution to $\L u=0$ on $B(x,r)$, $r<1$, satisfies
$$\left(\fint_{B(x,r/2)}|\nabla u|^q\,dy\right)^{1/q}\le \frac{C}{r}\fint_{B(x,r)}|u|\,dy$$
for any $q<\infty$. This implies the local Riesz operator $\nabla (1+\L)^{-1/2}$ is $L^q$-bounded
for any $q<\infty$; see \cite{acdh,cjks18}. The same holds for $\L_0$.
The conclusion then follows from Theorem \ref{main-denegerate}.
\end{proof}
For the homogenized elliptic operator $\L_0=-\mathrm{div}A\nabla$ (cf. \cite{AL91}), it was known by \cite{AL91} that
$\nabla \L_0^{-1/2}$ is bounded on $L^p(\rn)$ for all $p\in (1,\infty)$. Therefore, if
$\L=-\mathrm{div}B\nabla$ with $B$ uniformly continuous and satisfying
$$\fint_{B(y,r)} \left|A-B\right|\,dx\le \frac{C}{r^\epsilon},\quad\, \forall \,y\in\rn \, \& \, \forall \, r>1,$$
for some $\epsilon>0$, then Corollary \ref{cor-euclidean} implies that $\nabla \L^{-1/2}$ is bounded on $L^p(\rn)$ for all $p\in (1,\infty)$.

\section{Examples}\label{example}
\hskip\parindent In this section, we discuss the couter-examples regarding unboundedness of the Riesz operator.
\subsection{Conic Laplace operator}
\hskip\parindent Let us start from the Meyer's example; see \cite{at98}, and also \cite{lin96} for general asymptotically conic elliptic operators.

{\bf On the plane.} Consider $\L=-\mathrm{div} A\nabla$, where
$$A(x)=I+\frac{\beta(\beta+2)}{|x|^2}\left(\begin{array}{cc}
x_2^2& \ -x_1x_2\\
-x_1x_2&\ x_1^2\end{array}\right),$$
where $\beta\in (-1,\infty)$. Then $f(x)=|x|^{\beta}x_1$ is a (weak-)solution to $\L f=0$ on $\rr^2$.
In particular, for $\beta\in (-1,0)$, $\nabla f(x)$ is not locally $L^p$ integrable around the origin for any $p\ge 2/|\beta|$.
This implies the Riesz operator $\nabla \L^{-1/2}$ can not be bounded on $L^p(\rr^2)$ for any $p\ge 2/|\beta|$; see \cite{shz05,cjks16}.

To see the geometric meaning of $\L$, let us rewrite $\L$ in the polar coordinates.
First let $\lambda=\beta(\beta+2)+1$, and write
$$A(x)=I+\frac{\beta(\beta+2)}{|x|^2}\left(\begin{array}{cc}
x_2^2& \ -x_1x_2\\
-x_1x_2&\ x_1^2\end{array}\right)=\frac{1}{|x|^2}\left(\begin{array}{cc}
x_1^2& \ x_1x_2\\
x_1x_2&\ x_2^2\end{array}\right)+\lambda I+\frac{\lambda}{|x|^2}\left(\begin{array}{cc}
-x_1^2& \ -x_1x_2\\
-x_1x_2&\ -x_2^2\end{array}\right).$$
Set
$$A_1(x)=\frac{1}{|x|^2}\left(\begin{array}{cc}
x_1^2& \ x_1x_2\\
x_1x_2&\ x_2^2\end{array}\right),$$
and  $A_2(x)=A(x)-A_1(x)$. Then in the polar coordinates, the operator $\L_1=-\mathrm{div} A_1\nabla$ has the representation
$$\L_1 f=-\frac 1r \frac{\partial }{\partial r} \left(r\frac{\partial f}{\partial r}\right),$$
and $\L_2=-\mathrm{div} A_2\nabla$ can be represented as
$$\L_2 f=-\frac{\lambda}{r^2}\frac{\partial^2 f }{\partial \theta^2}.$$
This means
$$\L f=-\frac 1r \frac{\partial }{\partial r} \left(r\frac{\partial f}{\partial r}\right)-\frac{\lambda}{r^2}\frac{\partial^2 f }{\partial \theta^2}.$$
It is easy to see that  $f(x)=f(r,\theta)=r^{1+\beta}\cos\theta$ satisfies $\L f=0$ on $\rr^2$ (in the weak sense).
%
%In a geometric point of view, the operator $\L$ was obtained by   dilating the metric of the unit sphere  by a fact $1/\lambda$, $\lambda>0$.

\

{\bf On $\rr^N$, $N\ge 3$}. It is straight to generalize the above operator to higher dimension as
$$\L f=-\frac{1}{r^{N-1}}\frac{\partial }{\partial r} \left(r^{N-1}\frac{\partial f}{\partial r}\right)-\frac{\lambda}{r^2}\Delta_{\mathbb{S}^{N-1}}f,$$
where $\Delta_{\mathbb{S}^{N-1}}$ is the spherical Laplacian operator, and $\lambda>0$.
In the Euclidean coordinates, the operator has the form $\L f=-\mathrm{div} A\nabla f$, where
$$A(x)=\frac{1}{|x|^2}A_{N}+\lambda I-\frac{\lambda}{|x|^2}A_N,$$
where $A_N=\{x_ix_j\}_{1\le i,j\le N}$ and $\lambda\in (0,\infty)$.

Then functions $f_i(x)=|x|^{\beta}x_i$, where $\beta>-1$ satisfying
$$\beta=\sqrt{\frac{N^2}{4}+\lambda(N-1)-N+1 }-\frac N2=\sqrt{\left(\frac {N}{2}-1\right)^2+\lambda(N-1)},$$
$1\le i\le N$,
satisfy  $\L f_i=0$ on $\rr^N$.

In particular, if $\lambda\in (0,1)$, then $\beta\in (-1,0)$ and the gradient $\nabla f_i$
does not belong to $L^p_\loc(\rr^N)$ for any
$$p\ge \frac{N}{|\beta|}=N\left(\frac N2-\sqrt{\left(\frac {N}{2}-1\right)^2+\lambda(N-1)}\right)^{-1}.$$
This implies that the Riesz transform $\nabla \L^{-1/2}$ can not be bounded on  $L^p(\rr^N)$ for $p\ge N/|\beta|$;
see \cite{shz05,cjks16}.

Viewing $\lambda(N-1)$ as the lowest non-zero eigenvalue of the operator
$\lambda\Delta_{\mathbb{S}^{N-1}}$, this range coincides with Lin \cite{lin96}
of the conical elliptic operators, and also Li \cite{lh99} on general conic manifolds.

As $\beta\in (-1,0)$, the above generalization of conic Laplacian operator to higher dimensions
gives counter-example of failure of the boundedness of the Riesz transform for any $p>N$.

\

For the counter-example regarding the case $p\in (2,N]$, let us consider the operator $\L=-\mathrm{div}A\nabla $ given by the matrix
$$A(x)=I+\frac{\beta(\beta+2)}{x_1^2+x_2^2}\left(\begin{array}{ccccc}
x_2^2& \ -x_1x_2 & \ 0 & \cdots & \ 0\\
-x_1x_2&\ x_1^2  & \ 0 & \cdots & \ 0\\
0& \ 0& \ 0 & \cdots & \ 0\\
\cdots\\
0& \ 0& \ 0 & \cdots & \ 0
\end{array}\right),$$
where $\beta\in (-1,0)$. The functions $f_1(x)=(x_1^2+x_2^2)^{\beta/2}x_1$, $f_2(x)=(x_1^2+x_2^2)^{\beta/2}x_2$
are weak  solutions to $\L u=0$ on $\rr^N$. However, the gradients $\nabla f_1,\,\nabla f_2$
do not belong to $L^p_\loc(\rr^N)$ for any
$$p\ge \frac{2}{|\beta|}.$$
This shows the corresponding Riesz transform can not bounded on $L^p(\rr^N)$ for any $p\ge 2/|\beta|$.

\subsection{Uniformly elliptic operator with smooth coefficients}
\hskip\parindent  The previous examples shows that on $\rr^N$, $N\ge 2$, for any $p>2$, there are uniformly elliptic operators
such that the Riesz transform is not $L^p$-bounded.
However, these operators are not smooth at the origin.
We next provide a uniformly elliptic operator with smooth coefficients such that the Riesz transform
is not $L^p$-bounded, for any given $p>2$. The idea of the construction comes
  from the study of asymptotically conic elliptic operators in \cite{lin96} and the homogenization theory
  (cf. \cite{AL87,AL87b}).

\begin{thm}\label{matrix-smooth}
For any given matrix $A(x)$ with measurable coefficients satisfying uniformly elliptic conditions,
there exists a smooth matrix $B(x)$ satisfying uniformly elliptic conditions, whose derivatives of any order
are bounded, and a sequence $\{r_k\}_{k\in\cn}$, $r_k\to \infty$, such that $B(r_k x)\to A(x)$ a.e. on $\rn$.
\end{thm}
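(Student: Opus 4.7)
The plan is to construct $B$ by gluing together rescaled and mollified copies of $A$ on a sequence of widely separated annuli, using the identity matrix as a buffer in between. The point is that at scale $r_k$ the matrix $B$ should look exactly like a mollification $A_{\epsilon_k}=A*\phi_{\epsilon_k}$; I will let $\epsilon_k\to 0$ slowly enough that a.e.\ convergence holds, but $r_k\to\infty$ fast enough that the rescaling kills all the derivatives of the mollification.

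First I would fix a standard radial mollifier $\phi$ and set $A_\epsilon=A*\phi_\epsilon$. Since convex combinations of uniformly elliptic matrices are uniformly elliptic with the same constants, each $A_\epsilon$ is smooth, uniformly elliptic with the ellipticity constants of $A$, and satisfies $\|\nabla^j A_\epsilon\|_\infty\le C_j\epsilon^{-j}$ for every $j$. Since $A\in L^\infty(\rn)\subset L^1_{\loc}$, the Lebesgue differentiation theorem applied componentwise yields $A_\epsilon(x)\to A(x)$ at every Lebesgue point of $A$, hence a.e.\ on $\rn$, for any sequence $\epsilon\to 0$.

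Next I would pick $\epsilon_k=1/k$ and then choose $r_k$ recursively so that
\[
r_k\epsilon_k\ge 1\qquad\text{and}\qquad r_{k+1}>(k+1)(kr_k+2).
\]
Let $C_k=\{x\in\rn:\,r_k/k\le|x|\le kr_k\}$; the second inequality ensures the $C_k$ are pairwise disjoint and separated by gaps of length at least $2$. Fix a smooth cutoff $\eta:\rr\to[0,1]$ with $\eta\equiv 1$ on $(-\infty,0]$ and $\eta\equiv 0$ on $[1,\infty)$. Define $B$ by putting
\[
B(x)=A_{\epsilon_k}(x/r_k)\qquad\text{for }x\in C_k,
\]
and then on the width-one inner collar $\{r_k/k-1\le|x|\le r_k/k\}$ and width-one outer collar $\{kr_k\le|x|\le kr_k+1\}$ of each $C_k$ smoothly transitioning between $A_{\epsilon_k}(x/r_k)$ and $I_n$ via the radial cutoff $\eta$; elsewhere $B\equiv I_n$. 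The gap condition guarantees these transition collars are pairwise disjoint, so $B$ is smooth on $\rn$. At every point $B(x)$ is a convex combination of uniformly elliptic matrices (namely $A_{\epsilon_k}(x/r_k)$ and $I_n$), so $B$ itself is uniformly elliptic. For the derivative bounds, on $C_k$ one has
\[
|\nabla^j B(x)|=r_k^{-j}|(\nabla^j A_{\epsilon_k})(x/r_k)|\le C_j(r_k\epsilon_k)^{-j}\le C_j,
\]
on each transition collar the derivatives of $\eta$ contribute universal constants while $A_{\epsilon_k}(x/r_k)$ itself is still controlled by the same estimate (using $r_k\ge 1$), and on the plateau regions $B$ is constant.

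The convergence is now automatic: fix $x\ne 0$; once $k$ is large enough that $1/k<|x|<k$, one has $r_kx\in C_k$, so $B(r_kx)=A_{\epsilon_k}(x)$, and the right-hand side converges to $A(x)$ at every Lebesgue point of $A$, that is, a.e.\ on $\rn$. The only real difficulty is the bookkeeping: simultaneously enforcing $r_k\to\infty$, $\epsilon_k\to 0$, and $r_k\epsilon_k\to\infty$ so that the rescaled mollifications approximate $A$ a.e.\ while their derivative norms contract to a uniform bound, and at the same time keeping the cores $C_k$ separated enough to accommodate $O(1)$-wide smooth transitions. Once the parameters are chosen as above, the verification of all claimed properties is routine.
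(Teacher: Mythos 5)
Your construction is correct, and it rests on the same two ideas the paper uses: placing rescaled copies of a mollified $A$ on a widely separated sequence of annuli with $I_n$ as the ambient buffer, and invoking Lebesgue differentiation to get $B(r_kx)\to A(x)$ a.e. The one real structural difference is the order of mollification and gluing. The paper defines a \emph{discontinuous} patchwork $\tilde B$ equal to $A(\cdot/r_k)$ on the $k$-th annulus and $I_n$ elsewhere, and then sets $B=\tilde B*\psi$ with a single \emph{unit-scale} mollifier $\psi$; smoothness and bounded derivatives of every order then come for free from $\tilde B\in L^\infty$ and $\nabla^jB=\tilde B*\nabla^j\psi$, and, after rescaling by $r_m$, the effective mollification scale becomes $1/r_m\to0$, which is what feeds Lebesgue differentiation. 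You instead mollify $A$ first at a shrinking scale $\epsilon_k$, rescale by $r_k$, and glue the already-smooth pieces explicitly with radial cutoffs; this forces you to carry the extra parameter $\epsilon_k$, impose $r_k\epsilon_k\ge1$ so that $|\nabla^jB|\le C_j(r_k\epsilon_k)^{-j}$ stays uniformly bounded on the cores, and verify the Leibniz estimates on the collars. Both routes work; the paper's glue-then-mollify is slightly leaner because regularity is automatic, whereas your mollify-then-glue makes the derivative scaling of each piece explicit and is closer to a partition-of-unity construction. The small bookkeeping items you should settle when writing this up: start the annuli at $k\ge 2$ (at $k=1$ your $C_1$ degenerates to a sphere), and note that the collars lie at radius $\ge1$ so that derivatives of $|x|$ are controlled and the radial cutoff is smooth.
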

\begin{proof}
Let $0\le \psi\in C^\infty(\rn)$ be a mollifier, i.e. $\supp \psi\subset B(0,1)$ and $\int_\rn \psi\,dx=1$.

Choose an increasing sequence $\{r_k\}_{k\in\cn}$, $1<r_k\to \infty$, such that
$$r_k <r_k^2<2r_k^2<\sqrt {r_{k+1}}-1$$
for any $k\in\cn$.

Set
$$\tilde B(x):=\left\{\begin{array}{cc}
A(x/r_k), & \ \mbox{if} \ x\in \bigcup_{k\in\cn} B(0,2r_k^2)\setminus B(0,\sqrt{r_k}-1),\\
I_{n\times n}, & \mbox{other}\ x.
\end{array}
\right.
$$
Then $\tilde B$ satisfies the uniformly elliptic conditions. Let
$$B(x):=\tilde B\ast \psi(x). $$
Then $B$ is a $C^\infty$ matrix satisfying uniform ellipticity and any order of its gradients is bounded.

Note that for $x\in B(0,r_k)\setminus B(0,1/\sqrt{r_k})$, for all $m>k$ it holds that
$$r_mx\in B(0,r_kr_m)\setminus B(r_m/\sqrt{r_k})\subset B(0,r_m^2)\setminus B(\sqrt{r_m}).$$
By the choose of $B$, we see that it holds for all $m>k$ that
\begin{eqnarray*}
B(r_mx)=\int_\rn A(y/r_m)\psi(r_mx-y)\,dy=r_m^n\int_\rn A(y)\psi(r_m(x-y))\,dy.
\end{eqnarray*}
Letting $m\to \infty$ yields that for a.e. $x\in B(0,r_k)\setminus B(0,1/\sqrt{r_k})$ it holds
\begin{eqnarray*}
\lim_{m\to\infty}B(r_mx)=A(x).
\end{eqnarray*}
Therefore, we see that for a.e. $x\in\rn$, it holds
\begin{eqnarray*}
\lim_{m\to\infty}B(r_mx)=A(x).
\end{eqnarray*}
\end{proof}
We can now finish the proof of Proposition \ref{counter-example}.
\begin{proof}[Proof of Proposition \ref{counter-example}]
From previous subsection, for any $p>2$ on $\rr^n$, $n\ge 2$, we may find a uniformly elliptic operator $\L=-\mathrm{div}A\nabla$
such that there exists a solution $u$ to $\L u=0$ on $\rr^n$ and $\nabla u$ is not $L^p$ integrable near the origin.

By Theorem \ref{matrix-smooth}, we may find a smooth matrix $B(x)$ satisfying the uniform ellipticity and a positive sequence
$\{r_k\}_{k\in\cn}$, $r_k\to \infty$, such that $B(r_k x)\to A(x)$ a.e. on $\rn$.

We claim that for $\L_0=-\mathrm{div}B\nabla $ the Riesz transform $\nabla \L_0^{-1/2}$ is not bounded on $L^p(\rr^n)$.

Let us argue by contradiction. Assume that $\nabla \L_0^{-1/2}$ is bounded on $L^p(\rr^n)$. Then by \cite{shz05,cjks16},
one sees that there exists $C>0$ such that for any ball $B(x,2r)$ and any $\L_0$-harmonic function $v$ on $B(x,2r)$, it holds
$$\left(\fint_{B(x,r)}|\nabla v|^p\,dy\right)^{1/p}\le \frac{C}{r}\fint_{B(x,2r)}|v|\,dy.\leqno(RH_p)$$
For any $k\in\cn$ consider
$$\left\{\begin{array}{cc}
-\mathrm{div}B(r_k\cdot)\nabla v_k=0, &  \mbox{on}\ B(0,1),\\
v_k=u, & \mbox{on}\ \partial B(0,1).
\end{array}
\right.
$$
Then there exists a subsequence, still denoted by $\{v_k\}_{k\in\cn}$, such that $v_k$ converges weakly
to $\tilde u$ in $W^{1,2}(B(0,1))$. Moreover, by using the $G$-convergence (cf. \cite{blp11}), there exists a limit operator $\tilde \L$, such that $\tilde \L$ is a uniformly elliptic operator
 and $\tilde \L\tilde u=0$. By the construction of $B$, one can infer that
$\tilde \L=\L$ and $\tilde u=u$. Moreover, the boundary condition implies that
 $$\|v_k\|_{W^{1,2}(B(0,1))}\le C\|u\|_{W^{1,2}(B(0,1))}.$$

Note that $v_k(x/r_k)\in W^{1,2}(B(0,r_k))$ satisfies $-\mathrm{div}B\nabla v_k(\cdot/r_k)=0$.
By using $(RH_p)$, one has
\begin{eqnarray*}\left(\fint_{B(0,r_k/2)}r_k^{-p}|\nabla v_k(y/r_k)|^p\,dy\right)^{1/p}\le \frac{C}{r_k}\fint_{B(0,r_k)}|v_k(y/r_k)|\,dy
\le \frac{C}{r_k}\fint_{B(0,1)}|v_k(x)|\,dx,
\end{eqnarray*}
and hence,
\begin{eqnarray*}\left(\fint_{B(0,1/2)}|\nabla v_k(x)|^p\,dx\right)^{1/p}\le C\fint_{B(0,1)}|v_k(x)|\,dx\le C\|u\|_{W^{1,2}(B(0,1))}.
\end{eqnarray*}
By applying Poincar\'e inequality, one has $v_k\in L^p(B(0,1/2))$ with
$$\|v_k\|_{L^p(B(0,1/2))}\le C\|u\|_{W^{1,2}(B(0,1))}.$$
These imply that $\{v_k\}_{k\in\cn}$ is a bounded sequence in $W^{1,p}(B(0,1/2))$, and there exists a subsequence $\{v_{k_j}\}_{j\in\cn}$
such that $v_{k_j}$ converges weakly to $u_0$ in $W^{1,p}(B(0,1/2))$.

This further implies $u=u_0\in W^{1,p}(B(0,1/2))$. This contradicts with the fact $\nabla u$ is not $L^p$-integrable around the origin.
Therefore, our claim holds, i.e., the Riesz transform $\nabla \L_0^{-1/2}$ is not bounded on $L^p(\rr^n)$,
where $\L_0=-\mathrm{div}B\nabla $.
\end{proof}

\appendix

\section{Appendix}\label{appendix}
\hskip\parindent In this appendix, we provide some basic fact of functional calculus.
These estimates are more or less well-known (see the proofs in \cite{cdn07} for instance), we include
them for completeness. Note that $(D)$ or $(GUB)$ is not required in this appendix.
Let $X$ be a locally compact, separable, metrisable, and connected
space equipped with a  Borel measure $\mu$ that is finite on compact sets and strictly positive on non-empty open sets.
Consider a strongly local and regular Dirichlet form $\E$ with the domain $\D$ on $L^2(X,\mu)$; see \cite{fot,cjks16} for precise definitions. Such a form can be written as
$$\E (f, g)= \int_X \,d\Gamma(f,g)=\int_X \langle\nabla f,\nabla g\rangle\,d\mu$$
for all $f, g\in \D$.

Corresponding to such a Dirichlet form $\E$, there exists an operator  denoted by $\mathcal{L} $,
acting on a dense domain $\mathscr{D}(\mathcal{L})$ in
$L^2(X,\mu)$, $\mathscr{D}(\mathcal{L})\subset \D$, such that for all
$f\in \mathscr{D}(\mathcal{L})$ and each
$g\in \D$,
$$\int_X f(x)\mathcal{L} g(x)\,d\mu(x)=\mathscr{E}(f,g).$$
The heat semigroup further satisfies  for any $q\in [1,\infty]$ and $t>0$ that
$$\|e^{-t\mathcal{L}}\|_{q\to q}\le 1,$$
where we denote the operator norm $\|\cdot\|_{L^p(X,\mu)\to L^p(X,\mu)}$ by $\|\cdot\|_{p\to p}$.
This implies for for any $s,t>0$ that
\begin{eqnarray}\label{a1}
\|(s+t\mathcal{L})^{-1}\|_{q\to q}&&\le \int_0^\infty \|e^{-r(s+t\mathcal{L})}\|_{q\to q}\,dr
 \le \int_0^\infty e^{-rs}\,dr\le \frac{1}{s}.
\end{eqnarray}
and
\begin{eqnarray}\label{a2}
\|(s+t\mathcal{L})^{-1/2}\|_{q\to q}&&\le C\int_0^\infty \|e^{-r(s+t\mathcal{L})}\|_{q\to q}\frac{\,dr}{\sqrt r}
 \le C\int_0^\infty e^{-rs}\frac{\,dr}{\sqrt r}\le \frac{C}{\sqrt{s}}.
\end{eqnarray}

\begin{lem}\label{lemma-s1}
Let $(X,\mu,\E)$ be a Dirichlet metric measure space. If the local Riesz transform
$|\nabla (1+\mathcal{L})^{-1/2}|$ is bounded on $L^p(X,\mu)$ for some $p\in (2,\infty)$, then the operator
$$\int_0^1|\nabla (1+t\mathcal{L})^{-1}|\frac{\,dt}{\sqrt t}$$
is bounded on $L^p(X,\mu)$.
\end{lem}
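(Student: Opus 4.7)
The target operator $Tf(x):=\int_0^1|\nabla(1+t\mathcal{L})^{-1}f|(x)\,dt/\sqrt{t}$ has the pointwise modulus \emph{inside} the integral. The corresponding ``modulus-outside'' version is easy: by functional calculus $\int_0^1 \nabla(1+t\mathcal{L})^{-1}\,dt/\sqrt{t}=\nabla F(\mathcal{L})$ with $F(\lambda)=2\arctan\sqrt{\lambda}/\sqrt{\lambda}=\pi(1+\lambda)^{-1/2}H(\lambda)$ for a bounded smooth $H$, so $\nabla F(\mathcal{L})=\pi\nabla(1+\mathcal{L})^{-1/2}H(\mathcal{L})$ is $L^p$-bounded by the hypothesis. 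The modulus-inside statement is genuinely stronger because cancellation in $t$ is unavailable: direct Minkowski plus the operator-norm bound $\|\nabla(1+t\mathcal{L})^{-1}\|_{p\to p}\lesssim t^{-1/2}$ produces the divergent integral $\int_0^1 t^{-1}\,dt$, so one must exploit more structure.

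My plan is to dominate $Tf$ pointwise by a semigroup integral and then split into an easy tail and a delicate singular part. Using the Laplace representation $(1+t\mathcal{L})^{-1}=\int_0^\infty e^{-s}e^{-st\mathcal{L}}\,ds$ and the vector-valued triangle inequality gives $|\nabla(1+t\mathcal{L})^{-1}f|(x)\le\int_0^\infty e^{-s}|\nabla e^{-st\mathcal{L}}f|(x)\,ds$. Multiplying by $dt/\sqrt{t}$, integrating over $t\in(0,1)$, and swapping the $s,t$-integrals via the substitution $u=st$ yields
\[ Tf(x)\le C\int_0^\infty|\nabla e^{-u\mathcal{L}}f|(x)\,\Psi(u)\,du,\qquad \Psi(u)=u^{-1/2}\int_u^\infty e^{-v}v^{-1/2}\,dv, \]
with $\Psi(u)\lesssim u^{-1/2}$ on $(0,1)$ and $\Psi(u)\lesssim u^{-1/2}e^{-u}$ for $u\ge 1$. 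The tail $u\ge 1$ is handled by the factorization $\nabla e^{-u\mathcal{L}}=\nabla(1+\mathcal{L})^{-1/2}\cdot(1+\mathcal{L})^{1/2}e^{-u\mathcal{L}}$: the spectral multiplier $(1+\mathcal{L})^{1/2}e^{-u\mathcal{L}}$ has $L^p$-norm uniformly bounded for $u\ge 1$ (a consequence of the bounded $H^\infty$-calculus of self-adjoint $\mathcal{L}$), so $\|\nabla e^{-u\mathcal{L}}f\|_p\le C\|f\|_p$; Minkowski together with the exponential decay of $\Psi$ gives a bounded contribution.

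The singular part $u\in(0,1)$ is the main obstacle: there $\|\nabla e^{-u\mathcal{L}}\|_{p\to p}\lesssim u^{-1/2}$, so naive Minkowski still produces $\int_0^1 u^{-1}\,du=\infty$. The plan is to dyadically decompose the interval as $[2^{-k-1},2^{-k}]$ for $k\ge 0$, bound each dyadic piece in $L^p$ by a constant multiple of $\|g_k\|_p$ where $g_k:=(1+\mathcal{L})^{1/2}e^{-c2^{-k}\mathcal{L}}f$, and exploit that the spectral functional calculus applied to $\{g_k\}$ yields a geometric Littlewood--Paley-type decomposition whose $L^p$-norms sum to $C\|f\|_p$. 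In effect one reorganizes the pointwise modulus as a dyadic sum whose telescoping structure is compatible with the local Riesz transform hypothesis, recovering a geometric series where Minkowski gave a divergent logarithmic one. This is the technical heart of the proof and is the point at which one genuinely uses the modulus-inside structure; it is carried out in \cite{cdn07} without recourse to Gaussian heat-kernel bounds or doubling, relying only on the Dirichlet-form framework and the $H^\infty$-calculus of $\mathcal{L}$.
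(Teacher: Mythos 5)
Your plan breaks down exactly at the point you flag as its technical heart, and the problem is not a missing detail but the approach itself. First, the summation mechanism you propose is unsubstantiated and false as stated: with $g_k=(1+\mathcal{L})^{1/2}e^{-c2^{-k}\mathcal{L}}f$ one only has $\|g_k\|_p\lesssim 2^{k/2}\|f\|_p$, and for $f$ whose spectrum is spread over many dyadic bands every term $2^{-k/2}\|g_k\|_p$ is comparable to $\|f\|_p$, so the dyadic sum reproduces precisely the logarithmic divergence you were trying to remove; no term-by-term norm estimate can turn it into a geometric series, and the appeal to \cite{cdn07} does not supply the missing argument (the estimates there, as here, are operator-norm estimates coming from resolvent identities, not a bound for a modulus-inside operator). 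Second, and more fundamentally, the route is doomed the moment you put the modulus inside the $t$-integral and dominate by the semigroup: the majorant $f\mapsto\int_0^1|\nabla e^{-u\mathcal{L}}f|\,u^{-1/2}du$, and indeed the modulus-inside operator $f\mapsto\int_0^1|\nabla(1+t\mathcal{L})^{-1}f|\,t^{-1/2}dt$ itself, is unbounded on $L^p$ already for $\mathcal{L}=-\Delta$ on $\rn$. Take $f_\epsilon=\phi(x)\sum_{j\le N}\epsilon_j\cos(2^jx_1)$ with $\phi$ a fixed bump: lacunarity gives $\|f_\epsilon\|_p\lesssim\sqrt N$ uniformly in the signs, while at scales $t\sim 2^{-2j}$ the $j$-th block has gradient of size $\sim 2^j|\phi|$ and a Khintchine argument in $\epsilon$ shows that for some choice of signs each of the $N$ dyadic ranges of $t$ contributes a fixed multiple of $|\phi|$, so the modulus-inside integral has $L^p$-norm $\gtrsim N$. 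These operators behave like $\sum_j|\Delta_jf|$, an $\ell^1$-in-scales sum of Littlewood--Paley blocks, which is never $L^p$-bounded; no Littlewood--Paley reorganization can rescue the plan.

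Consequently the lemma has to be read, as its proof and its use in Theorem \ref{main-manifold} show, with the modulus outside the $t$-integral: what is established is that $f\mapsto\int_0^1\nabla(1+t\mathcal{L})^{-1}f\,\frac{dt}{\sqrt t}$ (an improper integral) is $L^p$-bounded, and the cancellation in $t$ must be retained. The paper keeps it by comparing with $\pi(1+\mathcal{L})^{-1/2}=\int_0^\infty(1+t+t\mathcal{L})^{-1}\frac{dt}{\sqrt t}$; the discrepancy consists of $\int_0^1 t\,\nabla(1+t+t\mathcal{L})^{-1}(1+t\mathcal{L})^{-1}\frac{dt}{\sqrt t}$ and $\int_1^\infty\nabla(1+t+t\mathcal{L})^{-1}\frac{dt}{\sqrt t}$, and after factoring $\nabla=\nabla(1+\mathcal{L})^{-1/2}\cdot(1+\mathcal{L})^{1/2}$ these are absolutely convergent by Minkowski, because the shifted resolvent gains decay: $\|(1+\mathcal{L})^{1/2}(1+t+t\mathcal{L})^{-1}\|_{p\to p}\lesssim t^{-1/2}(1+t)^{-1/2}$ by the contraction estimates \eqref{a1}--\eqref{a2}, which yields $\int_0^1 Ct^{1/2}\frac{dt}{\sqrt t}+\int_1^\infty \frac{C}{t\sqrt{1+t}}\,dt<\infty$. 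This single use of the local Riesz hypothesis on the main term is exactly the $t$-cancellation your pointwise domination throws away. Note also that your opening aside is not free: the $L^p$-boundedness of $H(\mathcal{L})$ does not follow ``from the hypothesis'' (no spectral multiplier theorem is available in this Dirichlet-form generality); justifying it requires precisely the resolvent-comparison estimates above, which are the actual content of the paper's proof.
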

\begin{proof}
Note that
\begin{eqnarray*}
\pi(1+\mathcal{L})^{-1/2}-\int_0^1  (1+t\mathcal{L})^{-1}\frac{\,dt}{\sqrt t}&&=
\int_0^1 [(1+t+t\mathcal{L})^{-1}-(1+t\mathcal{L})^{-1}]\frac{\,dt}{\sqrt t}+\int_1^\infty(1+t+t\mathcal{L})^{-1}\frac{\,dt}{\sqrt t}\\
&&=-\int_0^1 -t(1+t+t\mathcal{L})^{-1}(1+t\mathcal{L})^{-1}\frac{\,dt}{\sqrt t}+\int_1^\infty(1+t+t\mathcal{L})^{-1}\frac{\,dt}{\sqrt t}.
\end{eqnarray*}
From this and using \eqref{a1}, \eqref{a2} and the condition that $|\nabla (1+\mathcal{L})^{-1/2}|$ is bounded on $L^p(X,\mu)$, we  deduce that
\begin{eqnarray*}
&&\int_0^1 \left\|-t|\nabla (1+t+t\mathcal{L})^{-1}|(1+t\mathcal{L})^{-1}\right\|_{p\to p}\frac{\,dt}{\sqrt t}\\
&&\le
\int_0^1 \left\|-t|\nabla (1+\mathcal L)^{-1/2}(1+\mathcal L)^{1/2}(1+t+t\mathcal{L})^{-1}|(1+t\mathcal{L})^{-1}\right\|_{p\to p}\frac{\,dt}{\sqrt t}\\
&&\le \int_0^1 C\sqrt t\left\|(t+t\mathcal{L})^{1/2}(1+t+t\mathcal{L})^{-1}(1+t\mathcal{L})^{-1}\right\|_{p\to p}\frac{\,dt}{\sqrt t}\\
&&\le \int_0^1 C\left\|(1+t+t\mathcal{L})^{-1/2}(1+t\mathcal{L})^{-1}\right\|_{p\to p}\,dt\\
&&\le C,
\end{eqnarray*}
and
\begin{eqnarray*}
\int_1^\infty\|\nabla (1+t+t\mathcal{L})^{-1}\|_{p\to p}\frac{\,dt}{\sqrt t} &&\le \int_1^\infty\|\nabla(1+\mathcal L)^{-1/2}(1+\mathcal L)^{1/2} (1+t+t\mathcal{L})^{-1}\|_{p\to p}\frac{\,dt}{\sqrt t}\\
&&\le \int_1^\infty C\|(t+t\mathcal{L})^{1/2} (1+t+t\mathcal{L})^{-1}\|_{p\to p}\frac{\,dt}{ t}\\
&&\le \int_1^\infty C\| (1+t+t\mathcal{L})^{-1/2}\|_{p\to p}\frac{\,dt}{ t}\\
&&\le \int_1^\infty C\frac{\,dt}{ t\sqrt{(1+t)}}\le C.
\end{eqnarray*}
The above two estimates imply that
$$\left\|\int_0^1|\nabla (1+t\mathcal{L})^{-1}|\frac{\,dt}{\sqrt t}\right\|_{p\to p}\le C+C\|\nabla(1+\mathcal L)^{-1/2}\|_{p\to p}\le C.$$
\end{proof}

\begin{lem}\label{lemma-s2}
Let $(X,\mu,\E)$ be a Dirichlet metric measure space.
Suppose that for some $p\in (2,\infty)$, $|\nabla (1+\mathcal{L})^{-1/2}|$ is bounded on $L^p(X,\mu)$.
Assume for some $\nu\in [0,1/2)$, it holds for any $t>1$ that
\begin{eqnarray}\label{a3}
\|\nabla (1+t\mathcal{L})^{-1}\|_{p\to p}\le Ct^{-\nu}.
\end{eqnarray}
Then for any $t>1$ it holds
\begin{eqnarray*}
\|\nabla (1+t\mathcal{L})^{-1/2}\|_{p\to p}\le  Ct^{-\nu}.
\end{eqnarray*}
\end{lem}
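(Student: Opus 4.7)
The plan is to begin with the Balakrishnan subordination identity applied to $A=1+t\mathcal{L}$, which after the change of variable $u=t/(1+s)$ takes the form
\[
(1+t\mathcal{L})^{-1/2}=\frac{1}{\pi}\int_0^t\frac{(1+u\mathcal{L})^{-1}}{\sqrt{u(t-u)}}\,du.
\]
I would then apply $\nabla$ under the integral and split at $u=1$, writing $\nabla(1+t\mathcal{L})^{-1/2}=J_1+J_2$, with $J_1$ and $J_2$ the integrals over $(0,1)$ and $(1,t)$, respectively. For $1<t\le 2$ the conclusion follows from the uniform bound $\|\nabla(1+t\mathcal{L})^{-1/2}\|_{p\to p}\le C$, which one extracts from the local Riesz hypothesis together with the functional-calculus estimate $\|(1+\mathcal{L})^{1/2}(1+t\mathcal{L})^{-1/2}\|_{p\to p}\le C$ valid for $t\ge 1$; so the real issue is $t\ge 2$.

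For $J_2$ I would insert hypothesis \eqref{a3} pointwise in $u$, $\|\nabla(1+u\mathcal{L})^{-1}\|_{p\to p}\le Cu^{-\nu}$, and rescale $u=tv$ to reduce the estimate to a Beta-type integral
\[
\|J_2\|_{p\to p}\le Ct^{-\nu}\int_{1/t}^1 v^{-\nu-1/2}(1-v)^{-1/2}\,dv\le Ct^{-\nu},
\]
where the last integral stays bounded uniformly in $t$ because $\nu<1/2$ keeps both endpoints integrable.

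The principal obstacle is $J_1$. A naive pointwise estimate $\|\nabla(1+u\mathcal{L})^{-1}\|_{p\to p}\lesssim u^{-1/2}$ (which is the best one can extract by factoring through $\nabla(1+\mathcal{L})^{-1/2}$) produces a non-integrable $1/u$ singularity at $u=0$. The remedy is to refuse to estimate inside the integral and instead to group the short-time resolvents into a single bounded spectral multiplier: write $J_1=\pi^{-1}\nabla S_t$ where
\[
S_t:=\int_0^1\frac{(1+u\mathcal{L})^{-1}}{\sqrt{u(t-u)}}\,du.
\]
Using $\sqrt{t-u}\ge\sqrt{t/2}$ for $u\le 1$ and $t\ge 2$, together with the explicit spectral identity $\int_0^1(1+u\lambda)^{-1}u^{-1/2}\,du=2\arctan(\sqrt\lambda)/\sqrt\lambda$, one checks that the symbol of $(1+\mathcal{L})^{1/2}S_t$ is bounded by $C/\sqrt{t}$ uniformly on $[0,\infty)$ (it tends to a multiple of $1/\sqrt t$ at $\lambda=0$ and to $\pi/\sqrt t$ at $\infty$). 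Factoring $\nabla S_t=\nabla(1+\mathcal{L})^{-1/2}\cdot(1+\mathcal{L})^{1/2}S_t$ and invoking the local Riesz hypothesis together with the bounded-multiplier estimate yields $\|\nabla S_t\|_{p\to p}\le Ct^{-1/2}$, hence $\|J_1\|_{p\to p}\le Ct^{-1/2}\le Ct^{-\nu}$ since $\nu<1/2$. Summing $J_1$ and $J_2$ then finishes the proof. The delicate step is the $u\to 0$ analysis in $J_1$, where pointwise operator-norm estimates genuinely diverge and one has to exploit the cancellation encoded in the full spectral multiplier $S_t$ rather than integrate inside.
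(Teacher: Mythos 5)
Your decomposition is genuinely different from the paper's (you use the resolvent/Balakrishnan form of the subordination formula $(1+t\mathcal{L})^{-1/2}=\frac1\pi\int_0^t(1+u\mathcal{L})^{-1}\,du/\sqrt{u(t-u)}$, whereas the paper uses the heat-semigroup form $\int_0^\infty e^{-s}e^{-st\mathcal{L}}\,ds/\sqrt{s}$ and then compares against $\nabla(t+t\mathcal{L})^{-1/2}=t^{-1/2}\nabla(1+\mathcal{L})^{-1/2}$, whose norm is $Ct^{-1/2}$, so that the difference is what needs bounding). Your $J_2$ estimate is correct and elementary. The issue is entirely in $J_1$.

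You factor $\nabla S_t=\nabla(1+\mathcal{L})^{-1/2}\,(1+\mathcal{L})^{1/2}S_t$, compute the spectral symbol of $(1+\mathcal{L})^{1/2}S_t$, observe that it is bounded by $C/\sqrt t$ on $[0,\infty)$, and then invoke a ``bounded-multiplier estimate'' to conclude $\|(1+\mathcal{L})^{1/2}S_t\|_{p\to p}\le C/\sqrt t$. This last step is a genuine gap: in the generality of the appendix (a symmetric Markov semigroup with no heat-kernel or doubling hypotheses), boundedness of a Borel symbol on $[0,\infty)$ gives only the $L^2$ bound, not an $L^p$ bound for $p\ne 2$. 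The few multipliers used in the paper's appendix are of a very special form — convex combinations of contractions, or Laplace-transform-type multipliers, both of which can be verified directly; your symbol $(1+\lambda)^{1/2}\int_0^1(1+u\lambda)^{-1}\,du/\sqrt{u(t-u)}$ is not of that type, and an attempt to decompose it (e.g.\ via $(1+\mathcal{L})(1+u\mathcal{L})^{-1}=u^{-1}I+(1-u^{-1})(1+u\mathcal{L})^{-1}$ and integrating termwise) produces individually divergent pieces, precisely because the symbol's boundedness comes from cancellation across $u$. To complete your route one would need something like Cowling's theorem giving bounded $H^\infty(\Sigma_\omega)$-calculus on $L^p$ for sub-Markovian semigroups, and then verify holomorphy and boundedness of the symbol on a sector of angle $>\pi|1/p-1/2|$ — a substantial and unacknowledged input. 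The paper avoids all of this: the near-$s=0$ contribution to $\nabla(1+t\mathcal{L})^{-1/2}-\nabla(t+t\mathcal{L})^{-1/2}$ carries the small factor $e^{-s}-e^{-st}\lesssim st$, which tames the $(st)^{-1/2}$ gradient bound and yields $O(t^{-1/2})$ with only elementary estimates, while the $s>1/t$ piece is handled exactly as in your $J_2$.

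Also, for the same reason, the uniform bound you quote for $1<t\le 2$, namely $\|(1+\mathcal{L})^{1/2}(1+t\mathcal{L})^{-1/2}\|_{p\to p}\le C$, needs justification beyond the sup-norm observation. The paper sidesteps this too: $\|\nabla(1+t\mathcal{L})^{-1}\|_{p\to p}\le C$ for $t>1$ is elementary (via $(1+\mathcal{L})^{1/2}(1+t\mathcal{L})^{-1}=(1+\mathcal{L})^{-1/2}[t^{-1}I+(1-t^{-1})(1+t\mathcal{L})^{-1}]$, a convex combination of contractions), and Lemma \ref{lemma-s2} itself with $\nu=0$ then upgrades it to the $-1/2$ power. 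Your plan would do well to replace the appeal to a ``bounded-multiplier estimate'' with a decomposition into $L^p$-controllable pieces, or to state explicitly and verify the hypotheses of the $H^\infty$-calculus theorem you implicitly rely on.
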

\begin{proof}
The boundedness of $|\nabla (1+\mathcal{L})^{-1/2}|$ on $L^p(X,\mu)$ implies that for any $r>0$
\begin{equation}\label{a4}
\||\nabla e^{-r\mathcal{L}}|\|_{p\to p}\le \|\nabla (1+\mathcal{L})^{-1/2}\|_{p\to p}\|(r+r\mathcal{L})^{1/2}e^{-r(1+\mathcal{L})}|\|_{p\to p} \frac{e^{r}}{\sqrt r}\le Cr^{-1/2}e^{r}.
\end{equation}

Write
\begin{eqnarray*}
|\nabla(t+t\mathcal{L})^{-1/2}-\nabla(1+t\mathcal{L})^{-1/2}|
&&\le \frac{1}{\pi}\int_0^\infty [e^{-s}-e^{-st}] |\nabla e^{-st\mathcal{L}}|\frac{\,ds}{\sqrt s}\\
&&=\int_0^{1/t}\cdots+\int_{1/t}^\infty\cdots=:I+II.
\end{eqnarray*}
For the term $I$, by using \eqref{a4} and the fact $|e^{-st}-e^{-s}|\le Cst$ for any $t>1$ and $s\le 1/t$, one has
\begin{eqnarray*}
\|I\|_{p\to p}&&\le C\int_0^{1/t} Cst\|\nabla e^{-st\mathcal{L}}\|_{p\to p}\frac{\,ds}{\sqrt s}\le \int_0^{1/t} \frac{Cst e^{st}}{\sqrt{st}}\frac{\,ds}{\sqrt s}\le \frac{C}{\sqrt t}.
\end{eqnarray*}
For the term $II$, one has via \eqref{a3} that
\begin{eqnarray*}
\|II\|_{p\to p}&&\le \int_{1/t}^\infty (e^{-s}-e^{-st})\|\nabla e^{-st\mathcal{L}}\|_{p\to p}\frac{\,ds}{\sqrt s}\\
&&\le \int_{1/t}^\infty e^{-s}\|\nabla (1+st\mathcal{L})^{-1}\|_{p\to p} \|(1+st\mathcal{L})e^{-st\mathcal{L}}\|_{p\to p}\frac{\,ds}{\sqrt s}\\
&&\le \int_{1/t}^\infty e^{-s}(st)^{-\nu}\frac{\,ds}{\sqrt s}\le Ct^{-\nu},
\end{eqnarray*}
where $\nu\in [0,1/2)$. The estimates of $I$ and $II$ imply that
\begin{eqnarray*}
\|\nabla (1+t\mathcal{L})^{-1/2}\|_{p\to p}&&\le \|\nabla (t+t\mathcal{L})^{-1/2}\|_{p\to p}+\|\nabla [(t+t\mathcal{L})^{-1/2}-(1+t\mathcal{L})^{-1/2}]\|_{p\to p}\\
&&\le Ct^{-1/2}+Ct^{-\nu}\le Ct^{-\nu}.
\end{eqnarray*}
\end{proof}

\subsection*{Acknowledgments}
\addcontentsline{toc}{section}{Acknowledgments} \hskip\parindent
R. Jiang was partially supported by NNSF of China (11671039), F.H. Lin
was in part supported by the National Science Foundation Grant DMS-1501000.

\noindent Renjin Jiang \\
\noindent  Center for Applied Mathematics, Tianjin University, Tianjin 300072, China\\
\noindent {rejiang@tju.edu.cn}

\

\noindent Fanghua Lin\\
\noindent Courant Institute of Mathematical Sciences, 251 Mercer Street, New York, NY 10012, USA \\
\noindent linf@cims.nyu.edu

\end{document}